\newcommand{\twoheaddownarrow}{\mathrel{\rotatebox[origin=c]{-90}{$\twoheadrightarrow$}}}
\numberwithin{equation}{section}
\theoremstyle{plain}
\newtheorem{thm}{Theorem}[section]
\newtheorem{proposition}[thm]{Proposition}
\newtheorem{porism}[thm]{Porism}
\newtheorem{lemma}[thm]{Lemma}
\newtheorem{thmABC}{Theorem}
\newtheorem{corABC}[thmABC]{Corollary}
\theoremstyle{definition}
\newtheorem{definition}[thm]{Definition}
\newtheorem*{remark*}{Remark}
\newtheorem{remark}[thm]{Remark}
\newcommand{\N}{\mathbb{N}}
\newcommand{\Z}{\mathbb{Z}}
\newcommand{\Q}{\mathbb{Q}}
\newcommand{\C}{\mathbb{C}}
\newcommand{\lri}{\mathfrak{o}}
\newcommand{\Lri}{\mathfrak{O}}
\newcommand{\fp}{\mathfrak{p}}
\newcommand{\Fp}{\mathfrak{P}}
\newcommand{\G}{\mathbf{G}}
\newcommand{\HH}{\mathbf{H}}
\newcommand{\GL}{\mathbf{GL}}
\newcommand{\GU}{\mathbf{GU}}
\newcommand{\SL}{\mathbf{SL}}
\newcommand{\SU}{\mathbf{SU}}
\newcommand{\g}{ \mathfrak{g}}
\newcommand{\gl}{\mathfrak{gl}}
\newcommand{\gu}{\mathfrak{gu}}
\newcommand{\jj}{ \mathfrak{j}}
\newcommand{\rr}{ \mathfrak{r}}
\newcommand{\M}{\textsf{M}}
\newcommand{\K}{\textsf{K}}
\newcommand{\tr}{\mathrm{Tr}}
\newcommand{\Gal}{\mathrm{Gal}}
\newcommand{\Stab}{\mathrm{Stab}}
\newcommand{\Ker}{\mathrm{Ker}}
\newcommand{\Hom}{\mathrm{Hom}}
\newcommand{\Irr}{\mathrm{Irr}}
\newcommand{\Ind}{\mathrm{Ind}}
\newcommand{\Res}{\mathrm{Res}}
\newcommand{\cP}{\mathcal{P}}
\newcommand{\cA}{\mathcal{A}}
\newcommand{\cM}{\mathcal{M}}
\newcommand{\cO}{\mathcal{O}}
\newcommand{\ty}{\tau}
\newcommand{\ee}{\varepsilon}
\newcommand{\kk}{k}
\newcommand{\kkq}{k_2}
\newcommand{\kkqd}{k_{2d}}
\newcommand{\kkqn}{k_{2n}}
\newcommand{\kkd}{k_d}
\newcommand{\kkn}{k_n}
\newcommand{\kkalg}{\overline{k}}
\newcommand{\lfi}{F}
\newcommand{\Lfi}{E}
\newcommand{\tC}{\mathrm{C}}
\newcommand{\xx}{{\bar{x}}}
\newcommand{\pooja}[1]{{\color{red}{#1}}}
\title[Regular characters of groups of type $\mathsf{A}_n$]{Regular characters of groups of type $\mathsf{A}_n$ over discrete valuation rings}
\author{Roi Krakovski}
\address{IBM Research - Haifa, Israel}
  \email{roik@il.ibm.com}
\author{Uri Onn}
\address{Department of Mathematics, Ben Gurion
  University of the Negev, Beer-Sheva 84105, Israel}
  \email{urionn@math.bgu.ac.il}
\author{Pooja Singla}\thanks{The research was supported by the Israel Science Foundation (ISF grant 382/11), UGC Centre
for Advanced Studies, India and by the Center for Advanced Studies in Mathematics at Ben Gurion University}
\address{Department of Mathematics,
Indian Institute of Science,
Bangalore 560012, India}
  \email{pooja@math.iisc.ernet.in}
\keywords{Representations of compact $p$-adic groups, Representation zeta functions, Ennola duality}
\subjclass[2010]{Primary 
20C15, 20G05, 11M41; Secondary  
15B33, 15B57, 20F69, 20G25, 20G35, 20H05.
}
\begin{document}


\begin{abstract}
Let $\lri$ be a complete discrete valuation ring with finite residue field $\kk$ of odd characteristic. Let $\G$ be a general or special linear group or a unitary group defined over $\lri$ and let $\g$ denote its Lie algebra. For every positive integer~$\ell$,  let $\K^\ell$ be the $\ell$-th principal congruence subgroup of $\G(\lri)$. A continuous irreducible representation of $\G(\lri)$ is called regular of level $\ell$ if it is trivial on $\K^{\ell+1}$ and its restriction to $\K^\ell/\K^{\ell+1} \simeq \g(\kk)$ consists of characters with $\G(\kk)$-stabiliser of minimal dimension. In this paper we construct the regular characters of $\G(\lri)$, compute their degrees and show that the latter satisfy Ennola duality. We give explicit uniform formulae for the regular part of the representation zeta functions of these groups.

\end{abstract}
\maketitle


\section{Introduction}

\subsection{Overview}
Let $\lfi$ be a non-Archimedean local field with ring of integers $\lri$, maximal ideal $\fp$ and finite residue field $\kk$ of cardinality $q$ and characteristic $p$. Thus $\lfi$ is either the field $k(\!(t)\!)$ of formal Laurent series in one variable over~$k$ or a finite extension of the field $\Q_p$ of $p$-adic numbers. For a positive integer $n$ let $\G$ be either the general linear group $\GL_n$ or the standard unitary group $\GU_n$ attached to an unramified quadratic extension $\Lfi/\lfi$. 

In this paper we construct a family of continuous complex-valued irreducible representations of the groups~$\G(\lri)$ and $\G'(\lri)=\G(\lri) \cap \SL_n(E)$ and analyse their dimensions. Being a maximal compact subgroup of $\G(\lfi)$, the group~$\G(\lri)$ and its representations play an important role in the construction of admissible representations of $\G(\lfi)$; see, for instance,~\cite{BK}. In a different direction, representations of the groups $\G'(\lri)$ play an important role in the representation theory of arithmetic groups of type $\textsf{A}_{n-1}$.  Indeed, suppose that $\cO$ is the ring of integers in a global field~$L$ and that $\HH$ is a connected, simply-connected absolutely almost simple algebraic group of type~$\mathsf{A}_{n-1}$ defined over~$\cO$. Assume that $\HH(\cO)$ has the congruence subgroup property, which means that finite quotients thereof are exhausted by the quotients of $\HH(\cO/I)$, where $I$ ranges over the non-zero ideals of $\cO$. Then, the finite dimensional irreducible complex representations of $\HH(\cO)$ are finite tensor products of representations of the completions $\HH(\cO_v)$, $v$ a place of $L$; see~\cite{LL}. For all but finitely many places these completions are of the form $\G'(\lri)$.     

In contrast to the well-understood representation theory of the finite groups of Lie type $\G(k)$ or of the locally compact groups $\G(F)$, representations of $\G(\lri)$ are considerably less understood; see \cite{Stasinski-survey} for a comprehensive account of the state of the art for the case of general linear groups.

The focus of this paper is on regular representations of $\G(\lri)$, which we now discuss. Recall that by virtue of the profiniteness of $\G(\lri)$, every complex continuous irreducible representation $\rho$ of $\G(\lri)$ has a {\em level} $\ell=\ell(\rho)$; this is the smallest natural number $\ell$ such that $\rho$ factors through the principal congruence quotient~$\G(\lri/\fp^{\ell+1})$. Let $\g$ denote the Lie algebra scheme of $\G$ and, for every positive integer $m$, let $\K^m=\mathrm{Ker}\left(\G(\lri) \to \G(\lri/\fp^m)\right)$be the~$m$-th principal congruence subgroup of $\G(\lri)$.  One has $\G(\kk)$-equivariant isomorphisms $\K^m/\K^{m+1} \simeq (\g(\kk),+)$ and a $\G(\kk)$-equivariant isomorphism $x \mapsto \varphi_x$ between $\g(\kk)$ and its Pontryagin dual 
 $\g(\kk)^\vee$= $\mathrm{Hom}_\Z\left(\g(\kk),\mathbb{C}^\times\right)$; see Section \ref{subsec:duality.for.lie.rings}.  
 \begin{definition}(Regular representations and characters) A character $\varphi_x \in \g(\kk)^\vee$ is called {\em regular} if $x$ is a regular element of $\g(\kk)$, that is, the characteristic polynomial of $x$ is equal to its minimal polynomial. An irreducible representation of level $\ell$ of $\G(\lri)$ and its character are called {\em regular} if the orbit of their restriction to $\K^\ell$ consists of pullbacks of regular characters of $\K^\ell/\K^{\ell+1} \simeq \g(\kk)$. 
\end{definition}

Regular elements of $\G(\kkalg)$, where $\kkalg$ is an algebraic closure of $\kk$, are defined as elements whose centraliser has minimal dimension, see e.g.  \cite[\S 3.5]{MR0352279}. We extend this definition to the present situation and consider the adjoint action of $\G(\kk)$ on $\g(\kk)$. In this case the stabilisers of regular elements have minimal dimension, namely, the rank of the group $n = \mathrm{rank}(\G)$; they are abelian; and the corresponding matrix is cyclic. See \cite[Theorem~3.6]{HGreg} for a proof of these equivalences over $\kkalg$ and deduce them for $\kk$ using base change.

\subsection{Main results} 
Our first main result is a construction of the regular characters of $\G(\lri)$ using explicit characters of their principal congruence subquotients.

\begin{thmABC}\label{thm:construction} Let $\G$ be $\GL_n$ or $\GU_n$ and let $\lri$ be a complete discrete valuation ring with finite residue field of odd characteristic. Let $\ell>1$ be an integer and let $m=\lfloor \ell/2 \rfloor$. For every regular character $\chi$ of $\G(\lri)$ of level $\ell-1$ there exists an explicitly constructed character $\sigma$ of $\K^m/\K^{\ell}$ such that $\sigma$ extends to its stabiliser in $\G(\lri)$ and the extended character induces irreducibly to give~$\chi$.
\end{thmABC}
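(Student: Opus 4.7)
My strategy is a Clifford--Mackey construction combined with an explicit Kirillov-orbit-style description of $\sigma$, distinguishing the parity of $\ell$.

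I begin by restricting $\chi$ to the normal abelian subgroup $\K^{\ell-1}/\K^\ell \simeq \g(\kk)$. Clifford's theorem decomposes $\Res_{\K^{\ell-1}}\chi$ as a multiple of a $\G(\lri)$-orbit sum of characters, and by the regularity hypothesis the orbit consists of characters $\varphi_x$ for some regular $x \in \g(\kk)$. Fixing such an $x$, the centraliser $\mathfrak{z}_x \subseteq \g(\kk)$ is abelian of dimension $n$, and similarly the centraliser of $x$ in $\G(\kk)$ is abelian.

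I then construct $\sigma$ on $\K^m/\K^\ell$ from $x$, with the form of the construction depending on the parity of $\ell$. In the even case $\ell = 2m$, $\K^m/\K^\ell$ is abelian (since $[\K^m,\K^m] \subseteq \K^{2m} = \K^\ell$), and the map $I+\pi^m Y \mapsto Y$ identifies it with $\g(\lri/\fp^m)$. I define the one-dimensional character
\[
\sigma(I+\pi^m Y) \;=\; \psi\!\bigl(\pi^m \tr(\hat x\,Y)\bigr),
\]
for a lift $\hat x \in \g(\lri/\fp^m)$ of $x$ and an additive character $\psi$ of $\lri$ of conductor $\fp^\ell$; a direct calculation gives $\sigma|_{\K^{\ell-1}/\K^\ell} = \varphi_x$. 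In the odd case $\ell = 2m+1$, $\K^m/\K^\ell$ is two-step nilpotent: the commutator pairing $\K^m/\K^{m+1} \times \K^m/\K^{m+1} \to \K^{\ell-1}/\K^\ell$ reduces to the Lie bracket on $\g(\kk)$, and composing with $\varphi_x$ yields a $\C^\times$-valued alternating form with radical $\mathfrak{z}_x$, via the trace identity $\tr(x[Y_1,Y_2]) = \tr([x,Y_1]Y_2)$. I then invoke the Heisenberg / Stone--von Neumann construction (inducing a linear character from a polarising subgroup containing $\K^{m+1}$) to produce the unique irreducible character $\sigma$ of $\K^m/\K^\ell$ with central character $\varphi_x$.

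With $\sigma$ in hand, I identify the stabiliser $J = \Stab_{\G(\lri)}(\sigma)$ by an adjoint-action computation; $J/\K^m$ is controlled by the centraliser of $x$, which is abelian by regularity. Exploiting this abelianity, I extend $\sigma$ to an irreducible $\tilde\sigma$ of $J$ via an explicit formula -- linearly in the even case, and in the odd case by constructing a splitting of the Heisenberg extension in which the relevant cohomological obstruction is trivialised by the regular structure. I then verify irreducibility of $\Ind_J^{\G(\lri)}\tilde\sigma$ via Mackey's criterion: for any $g \notin J$, the twist $g\cdot\tilde\sigma$ restricts to $\K^{\ell-1}$ with central character $\varphi_{\mathrm{Ad}(g)x}$, inequivalent to $\varphi_x$, which blocks common constituents on $J \cap gJg^{-1}$. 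Finally I match $\Ind_J^{\G(\lri)}\tilde\sigma$ with $\chi$ by Clifford uniqueness on $\K^{\ell-1}$.

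The principal obstacle is the odd parity case $\ell = 2m+1$, in which $\sigma$ is a higher-dimensional Heisenberg-type character. Producing $\sigma$ explicitly, computing its stabiliser, and extending it to $J$ each require detailed structural information about the regular element -- most critically the abelianity of $\mathfrak{z}_x$, which controls both the radical of the commutator form and the vanishing of the cohomological extension obstruction. Uniform treatment of $\GL_n$ and $\GU_n$ further requires care with the trace pairing on $\g(\kk)$ (using an appropriate anti-involution in the unitary case to identify $\g(\kk)$ with its Pontryagin dual).
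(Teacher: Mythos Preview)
Your overall architecture matches the paper's: restrict to $\K^{\ell-1}/\K^\ell$, pick a regular $x$, split on parity, take $\sigma$ linear for $\ell=2m$ and Heisenberg-type for $\ell=2m+1$, identify the stabiliser via $\tC_{\G}(x)$, extend, induce. The even case is essentially the paper's argument and is fine.

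The genuine gap is the odd-case extension. You assert that the cohomological obstruction to extending $\sigma$ to $J=I_{\G(\lri_\ell)}(\sigma)$ is ``trivialised by the regular structure'', citing the abelianity of $\mathfrak{z}_x$. But abelianity of $J/\K^m_\ell\simeq \tC_{\G(\lri_m)}(x_m)$ does \emph{not} by itself kill the Schur-multiplier obstruction: non-cyclic abelian $p$-groups have nontrivial $H^2(-,\C^\times)$, and $\sigma$ here is a genuinely higher-dimensional character of a $p$-group. This is precisely the point where Hill's original argument failed and where Takase formulated a conjecture on the vanishing of this multiplier. The paper does \emph{not} resolve it by a direct cohomology computation. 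Instead it (i) invokes Isaacs' criterion to reduce to extending $\sigma$ over each Sylow of $J/\K^m_\ell$ separately; (ii) handles primes $\ne p$ for free since $[Q:\K^m_\ell]$ is then coprime to $\sigma(1)$; and (iii) for the $p$-Sylow $P_x\subset \tC_{\G(\lri_\ell)}(x)$, uses the elementary lemma that a $p$-group acting on a symplectic $\mathbb{F}_p$-space stabilises some maximal isotropic subspace. This lets one choose the polarisation $\jj_x\supset\rr_\xx$ to be $P_x$-invariant, so the linear character $\log^*(\varphi_{|\jj_x})$ on $J_x$ extends to $J_xP_x$ by abelianity of $P_x$, and inducing to $\K^m_\ell P_x$ gives the sought extension of $\sigma$. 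Your proposal supplies none of this mechanism.

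A secondary imprecision: in the odd case $\sigma$ is \emph{not} uniquely determined by $\bar x\in\g(\kk)$. The centre of $\K^m_\ell$ is $\K^{m+1}_\ell$, not $\K^{\ell-1}_\ell$ (these differ once $\ell\ge 5$), and above a fixed central character $\theta$ of $\K^{m+1}_\ell$ there are $|\overline{\rr}_{\bar x}|$ irreducible $\sigma$'s, indexed by extensions $\theta'$ of $\theta$ to the radical $\rr_{\bar x}$. The paper then chooses the lift $x\in\g(\lri_\ell)$ so that $\varphi_x|_{\rr_{\bar x}}=\theta'$; only with this compatible choice does the stabiliser of $\sigma$ take the clean form $\tC_{\G(\lri_\ell)}(x)\K^m_\ell$. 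Your ``unique irreducible $\sigma$ with central character $\varphi_x$'' elides this, and the subsequent stabiliser computation depends on getting it right.
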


The study of regular characters for the groups $\GL_n(\lri)$ was initiated by Hill in~\cite{HGreg, HGreg-SSandCusp}. Hill constructed regular characters associated to split regular matrices in $\gl_n(\kk)$. As noted by Takase in~\cite{MR3448171}, Hill's argument fails in non-semisimple cases and a fix that depends on a conjectural vanishing of a certain Schur multiplier is given in \cite{MR3448171}. Although the present paper follows a somewhat different path, we encounter the same multiplier and show that the conjecture \cite[Conjecture~4.6.5]{MR3448171} indeed holds and in much greater generality; see Theorem~\ref{thm:bijection} and its proof. Regular characters of $\GL_n(\lri)$ that correspond to irreducible matrices in $\gl_n(\kk)$ were also constructed in~\cite{AOPS}. Recently, Stasinski and Stevens constructed the regular characters of $\GL_n(\lri)$, see~\cite{StaSte}. Their construction follows a different approach than the one taken here and applies to even residual characteristic as well.

\smallskip

Our next result gives uniform formulae for the degrees of the regular characters. We call a matrix $\ty \in \M_n(\Z_{\ge 0})$  an $n$-{\rm type} if  $n=\sum_{d,e} d e 
\ty_{d,e}$. Let $\cA_n$ be the set of $n$-types. It turns out (see Section~\ref{sec:regulars.in.g.k}) that regular elements in $\g(\kk)$ can be partitioned according to $n$-types and the latter determine the former's centralisers and eventually also determine the degrees of the corresponding regular characters. For $\epsilon \in \{ \pm 1\}$ and $\ty \in \cA_n$ define the following polynomials in~$\Z[x]$ 
\[
\begin{split}
 v_\epsilon(x)&=x^{n^2}  \prod_{1 \le d \le n}\left(1-\epsilon^{d}x^{-d}\right),~\text{and}  \\
  u^\ty_\epsilon(x)&=x^{n}  \prod_{1 \le d,e \le n} \left(1-\epsilon^d x^{-d}\right)^{\ty_{d,e}}.
\end{split}
\]
Let 
\[
\ee_\G=
\left\{
\begin{array}{cc}
1 & \text{if $\G=\GL_n$},\\
-1 & \text{if $\G=\GU_n$}.
\end{array} \right.
\]
We remark that $v_{\ee_\G}(q)$ is the order of $\G(\kk)$ and $u_{\ee_\G}^\ty(q)$ is the order of the centraliser in $\G(\kk)$ of a regular element in $\g(\kk)$ of type $\tau$; see Proposition~\ref{prop:form.of.centralizers}. 

\begin{thmABC}\label{thm:dimensions} 
Let $\G$ be either $\GL_n$ or $\GU_n$ and let $\ee_\G$ be $+1$ or $-1$, respectively. 
Then, for every complete discrete valuation ring $\lri$ with finite residue field of cardinality $q > n$ and odd characteristic, the degrees of 
the regular characters of $\G(\lri)$ of level $\ell$ are
\[
q^{\left(n \atop 2\right)(\ell-1)}v_{\ee_\G}(q)/u_{\ee_\G}^\ty(q), \quad \ty \in \cA_n.
\]
In particular, the degrees of the regular characters satisfy Ennola duality: for every polynomial $f \in \Z[x]$ the evaluation $f(q)$ is the degree of a regular character of $\GL_n(\lri)$ if and only if  $(-1)^{\deg(f)}f(-q)$ is the degree of a regular character of $\GU_n(\lri)$ of the same level. 
\end{thmABC}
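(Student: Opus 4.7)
The plan is to leverage Theorem~\ref{thm:construction} to reduce the degree computation to an index and a Heisenberg-type dimension. By that theorem, every regular character $\chi$ of level $\ell\ge 1$ can be realised as $\chi=\Ind_T^{\G(\lri)}\tilde\sigma$, where $\sigma$ is the explicit character of $\K^m/\K^{\ell+1}$ with $m=\lfloor(\ell+1)/2\rfloor$, $T=\Stab_{\G(\lri)}(\sigma)$, and $\tilde\sigma$ its extension to $T$. Hence $\dim\chi=[\G(\lri):T]\cdot\dim\sigma$, and the task reduces to computing both factors. Write $\ee=\ee_\G$ and let $\tau$ denote the type of the regular element $x\in\g(\kk)$ underlying $\chi$ via its restriction to $\K^\ell/\K^{\ell+1}$. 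I would split on the parity of $\ell$.

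For odd $\ell=2k-1$, $m=k$ and $\K^k/\K^{2k}$ is abelian (since $[\K^k,\K^k]\subseteq\K^{2k}$), so $\dim\sigma=1$. The duality of Section~\ref{subsec:duality.for.lie.rings} identifies $\sigma$ with some $\varphi_y$ for a lift $y\in\g(\lri/\fp^k)$ of $x$, and $T$ is the preimage in $\G(\lri)$ of $C_{\G(\lri/\fp^k)}(y)$. By smoothness of $\G$ and the fact that the centraliser of a regular element has dimension $n=\mathrm{rank}(\G)$, one obtains $|\G(\lri/\fp^k)|=q^{(k-1)n^2}v_\ee(q)$ and $|C_{\G(\lri/\fp^k)}(y)|=q^{(k-1)n}u_\ee^\tau(q)$, whence
\[
[\G(\lri):T]=q^{(k-1)n(n-1)}v_\ee(q)/u_\ee^\tau(q)=q^{(\ell-1)\binom{n}{2}}v_\ee(q)/u_\ee^\tau(q),
\]
as required.

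For even $\ell=2k$, $\K^k/\K^{2k+1}$ is two-step nilpotent with centre containing $\K^{2k}/\K^{2k+1}\simeq\g(\kk)$, and $\sigma$ has central character $\varphi_x$. The commutator in $\K^k/\K^{2k+1}$ reduces to the Lie bracket on $\g(\kk)$ via the canonical isomorphisms; consequently the bilinear form $B_x(u,v)=\varphi_x([u,v])$ on $\K^k/\K^{2k}$ vanishes on the lower layers $\K^{k+1}/\K^{2k}$ and on the top layer $\K^k/\K^{k+1}\simeq\g(\kk)$ equals $\varphi_x$ applied to the Lie bracket, whose radical is $C_{\g(\kk)}(x)$ of dimension $n$. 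Hence the total radical has index $q^{n^2-n}$ in $\K^k/\K^{2k}$, and the Heisenberg--Stone--von Neumann correspondence yields $\dim\sigma=q^{(n^2-n)/2}=q^{\binom{n}{2}}$. The explicit construction of Theorem~\ref{thm:construction} identifies $T$ with the preimage of $C_{\G(\lri/\fp^k)}(y)$ for a suitable lift $y$ of $x$, so the same smoothness calculation as above gives $[\G(\lri):T]=q^{(\ell-2)\binom{n}{2}}v_\ee(q)/u_\ee^\tau(q)$. Multiplying the two factors, $\dim\chi=q^{(\ell-1)\binom{n}{2}}v_\ee(q)/u_\ee^\tau(q)$.

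Finally, Ennola duality is verified by direct substitution. From the definitions, $v_1(-q)=(-1)^{n^2}v_{-1}(q)$ and $u_1^\tau(-q)=(-1)^n u_{-1}^\tau(q)$; since $n^2-n$ is even, $v_1(-q)/u_1^\tau(-q)=v_{-1}(q)/u_{-1}^\tau(q)$. Setting $f_\tau(x)=x^{(\ell-1)\binom{n}{2}}v_1(x)/u_1^\tau(x)$, one has $\deg f_\tau=(\ell+1)\binom{n}{2}$, and since $(\ell+1)+(\ell-1)=2\ell$ is even the identity $(-1)^{\deg f_\tau}f_\tau(-q)=q^{(\ell-1)\binom{n}{2}}v_{-1}(q)/u_{-1}^\tau(q)$ holds, which is precisely the degree of the corresponding regular character of $\GU_n(\lri)$. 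The main obstacle I anticipate lies in the even-level case: verifying that $B_x$ vanishes on the lower layers and has top-layer radical exactly $C_{\g(\kk)}(x)$, and identifying $T$ as a level-$k$ centraliser preimage (rather than at a finer level) relies crucially on the detailed form of $\sigma$ delivered by Theorem~\ref{thm:construction}.
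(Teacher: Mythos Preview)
Your proposal is correct and follows essentially the same approach as the paper: split on the parity of $\ell+1$, use the description of the inertia group $I_{\G(\lri_{\ell+1})}(\sigma)=\tC_{\G(\lri_{\ell+1})}(x)\K^m_{\ell+1}$ from the construction (Theorem~\ref{thm:bijection}/Lemma~\ref{lem:form.of.stab}) to compute the induction index as $|\G(\lri_m)|/|\tC_{\G(\lri_m)}(x_m)|$, and in the two-step nilpotent case pick up the extra Heisenberg factor $q^{\binom{n}{2}}$ coming from the radical $\tC_{\g(\kk)}(\bar x)$ of the commutator form. Your explicit verification of the Ennola substitution $(-1)^{\deg f_\tau}f_\tau(-q)$ is a welcome addition that the paper leaves to the reader; note only that the identification of $T$ with the preimage of $\tC_{\G(\lri_m)}(x_m)$ is not part of the \emph{statement} of Theorem~\ref{thm:construction} but of its elaboration Theorem~\ref{thm:bijection} (specifically \eqref{eq:stab.sigma}), so you should cite the latter.
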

We remark that the character degrees provided by Theorem~\ref{thm:dimensions} form a superset of the possible degrees for $q \le n$ as well, but in such case not all of them occur. The reminiscence in Theorem~\ref{thm:dimensions} of 'Ennola duality' gives further evidence that this phenomenon features in the groups $\GL_n(\lri)$ and $\GU_n(\lri)$ in general. In \cite{Ennola-unitary-char} Ennola observed that characters of $\GL_n(\lri/\fp)$ and of $\GU_n(\lri/\fp)$ are related upon a formal substitution of '$-q$', see also \cite{TV}. In fact, all characters of groups of type $\mathsf{A}_1$ and $\mathsf{A}_2$ obey this duality, as observed in \cite{AKOV2}, and also characters of level one of groups of type~$\mathsf{A}_3$, as observed in~\cite{Levy}.

\smallskip

The next result gives the number of regular characters of type $\ty$ and level $\ell$. For $d \in \N$, let
\[
w_d(x)=\frac{1}{d}\sum_{m|d}\mu(d/m)x^{m} \in \Q[x],
\]
where $\mu$ is the M\"{o}bius function. Thus $w_d(q)$ is the number of monic irreducible polynomials of degree $d$ over $k$. Let $\left(a \atop b \right)$ and $\left(a \atop b_1, b_2, \ldots, b_r \right)$ denote the binomial and multinomial coefficients, respectively. In what follows all summations and products indexed by $d$ and $e$ range over the set $\{1,...,n\}$. 

\begin{thmABC}\label{thm:numbers} Let $\G$ be either $\GL_n$ or $\GU_n$ and let $\ee_\G$ be $+1$ or $-1$, respectively. 
Then, for every complete discrete valuation ring $\lri$ with finite residue field of cardinality $q > n$ and odd characteristic, the number of regular characters of $\G(\lri)$ of type $\ty \in \mathcal{A}_n$ and level $\ell \in \N$  is   
\[
q^{(\ell-1) n}u_{\ee_\G}^\ty(q) \prod_{d} \left(\sum_{e} \tau_{d,e} \atop \tau_{d,1}, \tau_{d,2}, \ldots\right) \left(w_d(q) \atop \sum_e \tau_{d,e}\right).
\]
\end{thmABC}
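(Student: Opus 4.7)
The plan is to refine the construction of Theorem~\ref{thm:construction} into a bijection between regular characters of $\G(\lri)$ of level $\ell$ and type $\ty$ on the one hand, and pairs $(O,\theta)$ on the other, where $O$ is a $\G(\kk)$-orbit of regular elements of type $\ty$ in $\g(\kk)$ and $\theta$ is a complex character of the finite abelian group $C_\G(x)(\lri/\fp^\ell)$ for any chosen $x\in O$. The formula in the theorem then follows by multiplying the two resulting cardinalities.

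For the orbit count I would use that regular elements of $\g(\kk)$ are cyclic, so the $\G(\kk)$-conjugacy class of such an $x$ is determined by its characteristic polynomial, and $x$ has type $\ty$ precisely when, for each $d$, the polynomial has $\tau_{d,e}$ distinct irreducible factors of degree $d$ with multiplicity $e$. Enumerating such polynomials amounts, for each $d$, to first choosing $\sum_e \tau_{d,e}$ monic irreducibles of degree $d$ out of the $w_d(q)$ available, in $\left(w_d(q) \atop \sum_e \tau_{d,e}\right)$ ways, and then distributing them among the multiplicity classes $\tau_{d,1},\tau_{d,2},\ldots$, in $\left(\sum_e \tau_{d,e} \atop \tau_{d,1}, \tau_{d,2}, \ldots\right)$ ways. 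Taking the product over $d$ yields the combinatorial factor in the theorem, with the same enumeration applying to $\GU_n$ once one identifies the $\GU_n(\kk)$-orbits of regular elements of $\gu_n(\kk)$ with the analogous characteristic data via the Galois action attached to the unramified quadratic extension.

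For the count of extensions above a fixed orbit $O$ I would use that the centraliser $C=C_\G(x)$ of a regular $x$ is a smooth commutative $\lri$-group scheme of relative dimension $n$, so $|C(\lri/\fp^\ell)|=q^{n(\ell-1)}|C(\kk)|=q^{n(\ell-1)}u^\ty_{\ee_\G}(q)$. The construction of Theorem~\ref{thm:construction} attaches to each $\varphi_x$ a character $\sigma$ on an intermediate principal congruence quotient together with an extension to the full $\G(\lri)$-stabiliser of $\sigma$; Clifford theory allows one to check that the ambiguity in these choices is parameterised precisely by the complex characters of $C(\lri/\fp^\ell)$, with inequivalent choices inducing to non-isomorphic representations of $\G(\lri)$. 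Hence the number of regular characters of level $\ell$ above a given $O$ is $q^{n(\ell-1)}u^\ty_{\ee_\G}(q)$, and multiplying by the orbit count from the previous paragraph yields the stated formula.

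The main obstacle lies in the parameterisation claim of the previous paragraph: one must verify, by unpacking the construction of Theorem~\ref{thm:construction}, (i) that the freedom in the choice of $\sigma$ forms a torsor over the characters of $C$ at the intermediate levels, (ii) that the further freedom in extending $\sigma$ to its stabiliser forms a torsor over the characters of $C$ at the remaining lower levels, and (iii) that distinct characters of $C(\lri/\fp^\ell)$ produce non-isomorphic induced representations of $\G(\lri)$, which can be cross-checked against the degree formula of Theorem~\ref{thm:dimensions}. The unitary case reduces to an entirely parallel analysis in which the centraliser is replaced by its Galois-twisted analogue, consistent with the Ennola duality already visible in Theorem~\ref{thm:dimensions}.
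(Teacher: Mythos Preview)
Your outline matches the paper's proof: count the $\G(\kk)$-orbits of regular elements of $\g(\kk)$ of type $\ty$ (the combinatorial factor) and multiply by the number of regular characters lying above each orbit. The paper handles the second factor in two steps through Theorem~\ref{thm:bijection}(2) rather than via a single bijection with $C(\lri_\ell)^\vee$: it first counts the $\G(\lri_{\ell+1})$-inequivalent characters $\sigma$ of $\K^m_{\ell+1}$ above the chosen bottom-level orbit (with an extra Heisenberg factor $q^n$ when $\ell$ is even), and then the $|\tC_{\G(\lri_m)}(x_m)|$ extensions above each $\sigma$; the product of these two numbers equals your $q^{n(\ell-1)}u^\ty_{\ee_\G}(q)$, and this staging makes your item~(iii) automatic from Clifford theory without a separate torsor argument. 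One place where your sketch understates the work required: the claim that ``the same enumeration appl[ies] to $\GU_n$'' is precisely Proposition~\ref{prop:number.of.typical.pol}, whose proof needs the anti-hermitian analogue of Ennola's lemma (Lemma~\ref{lem:analog.of.ennola}) showing that a self-$\sim$-conjugate irreducible polynomial over $\kkq$ has odd degree and that the resulting degree-$d$ count agrees with $w_d(q)$; this is not a formal consequence of the Galois action alone.
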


A group $\Gamma$ is called {\em rigid} if for every $m \in \N$ there is a finite number, denoted $r_m(\Gamma)$, of inequivalent complex $m$-dimensional irreducible representations of $\Gamma$. The group $\Gamma$ has {\em polynomial representation growth} if the sequence $r_m(\Gamma)$ is bounded by a polynomial in~$m$. One then defines the representation zeta function of $\Gamma$ to be the Dirichlet generating function $\zeta_{\Gamma}(s)=\sum_m r_m(\Gamma) m^{-s}$. If $\Gamma$ is topological one counts only continuous representations. The group $\G(\lri)$ is not rigid; for instance, there are infinitely many linear continuous characters factoring through the determinant map. However, the number of regular representations of $\G(\lri)$ of dimension $m$ is bounded by polynomial in~$m$. Following Stasinski\footnote{personal communication.} we look at the regular part of the representation zeta function of the groups~$\G(\lri)$, the Dirichlet generating function that counts only regular characters. Combining Theorems~\ref{thm:dimensions}~and~\ref{thm:numbers} gives an explicit computation of the regular zeta function. 

\begin{corABC}\label{cor:zeta.G} The regular part of the representation zeta function of $\G(\lri)$ is 
\[
\zeta^{\mathrm{reg}}_{\G(\lri)}(s)=\frac{1}{1-q^{n-\left(n \atop 2\right) s}} \sum_{\ty \in \cA_n} u_{\ee_\G}^\ty(q) \prod_{d } \left(\sum_{e} \tau_{d,e} \atop \tau_{d,1}, \tau_{d,2}, \ldots\right) \left(w_d(q) \atop \sum_e \tau_{d,e}\right) \left( \frac{ v_{\ee_\G}(q)}{u_{\ee_\G}^\ty(q) }  \right)^{-s}.
\]

\end{corABC}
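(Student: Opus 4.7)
The proof will be a routine combination of Theorems~\ref{thm:dimensions}~and~\ref{thm:numbers} with the definition of the representation zeta function; the only analytic input is the summation of a geometric series in the level parameter.

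The first step is to unfold the definition. Every continuous irreducible character of $\G(\lri)$ has a well-defined level $\ell \in \N$, and regular characters of a given level are further partitioned by their type $\ty \in \cA_n$. Hence
\[
\zeta^{\mathrm{reg}}_{\G(\lri)}(s)=\sum_{\ell \ge 1}\sum_{\ty \in \cA_n} N(\ty,\ell)\,D(\ty,\ell)^{-s},
\]
where $N(\ty,\ell)$ and $D(\ty,\ell)$ denote the number and the common degree of regular characters of type $\ty$ and level $\ell$, respectively. Theorem~\ref{thm:dimensions} supplies $D(\ty,\ell)=q^{\left(n \atop 2\right)(\ell-1)} v_{\ee_\G}(q)/u^\ty_{\ee_\G}(q)$, and Theorem~\ref{thm:numbers} supplies $N(\ty,\ell)=q^{(\ell-1)n}\,u^\ty_{\ee_\G}(q)\prod_d\left(\sum_e \tau_{d,e}\atop\tau_{d,1},\tau_{d,2},\ldots\right)\left(w_d(q)\atop\sum_e \tau_{d,e}\right)$. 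Their dependence on $\ell$ is entirely captured by the factor $q^{(\ell-1)n} \cdot q^{-\left(n \atop 2\right)(\ell-1)s}=\bigl(q^{n-\left(n \atop 2\right) s}\bigr)^{\ell-1}$.

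The second step is to pull this factor out of the double sum:
\[
\zeta^{\mathrm{reg}}_{\G(\lri)}(s)=\Biggl(\sum_{\ell \ge 1}\bigl(q^{n-\left(n \atop 2\right)s}\bigr)^{\ell-1}\Biggr)\sum_{\ty \in \cA_n} u^\ty_{\ee_\G}(q)\prod_d\left(\sum_e \tau_{d,e}\atop\tau_{d,1},\tau_{d,2},\ldots\right)\left(w_d(q)\atop\sum_e \tau_{d,e}\right)\left(\frac{v_{\ee_\G}(q)}{u^\ty_{\ee_\G}(q)}\right)^{-s},
\]
and then to evaluate the resulting geometric series to $1/\bigl(1-q^{n-\left(n \atop 2\right)s}\bigr)$ (formally, or convergent whenever $\mathrm{Re}(s)$ is sufficiently large). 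This is precisely the stated formula.

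There is essentially no technical obstacle; the only point that warrants a line of attention is the boundary case $\ell=1$, which lies outside the hypothesis $\ell>1$ of the construction in Theorem~\ref{thm:construction}. However, Theorems~\ref{thm:dimensions}~and~\ref{thm:numbers} are stated uniformly in $\ell\ge 1$, and at level one the regular characters are produced directly by Clifford theory from $\K^1/\K^2 \simeq \g(\kk)$. With $\ell=1$ the prefactor $(q^{n-\left(n \atop 2\right) s})^{\ell-1}$ reduces to~$1$, consistently with the geometric series being indexed from its zeroth power.
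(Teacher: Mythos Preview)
Your proof is correct and follows the same approach as the paper: partition the regular characters by level and type, substitute the formulae from Theorems~\ref{thm:dimensions} and~\ref{thm:numbers}, factor out the $\ell$-dependence, and sum the resulting geometric series.

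One small remark on your closing aside: the $\ell$ in Theorem~\ref{thm:construction} is not the level but one more than it (the character lives on $\G(\lri_\ell)$ and has level $\ell-1$), so the hypothesis $\ell>1$ there already covers level~$1$; your worry is simply a clash of notation rather than a genuine boundary case. As you correctly observe, Theorems~\ref{thm:dimensions} and~\ref{thm:numbers} are stated for all $\ell\ge 1$, which is all that is needed here.
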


Finally, we give an explicit computation of the regular representation zeta function associated with $\G'(\lri)$. For $\ty \in \cA_n$ and $r \in \N$ let
\begin{equation}
\iota(\ty, r)=\gcd(\{e \mid \exists d ~\mathrm{with}~ \ty_{d,e} \ne 0 \} \cup \{r\}).
\end{equation}

\begin{thmABC}\label{thm:zeta.H} Let $\G'$ be either $\SL_n$ or $\SU_n$ and let $\ee_{\G}$ be $+1$ or $-1$, respectively. 
Then, for every discrete valuation ring $\lri$ with residue field of odd characteristic prime to $n$ and cardinality $q > n$, the regular part of the representation zeta function of $\G'(\lri)$ is 
\[
\begin{split}
\zeta^{\mathrm{reg}}_{\G'(\lri_{\ell+1})}(s) &=\frac{1}{1-q^{n-1-\left(n \atop 2\right) s}}\sum_{\ty \in \cA_n} \frac{\iota(\ty,q-\ee_\G)^2}{(q-\ee_\G)} u_{\ee_\G}^\ty(q) \\ & \qquad \qquad \qquad \times q^{-1}\prod_{d } \left(\sum_{e} \tau_{d,e} \atop \tau_{d,1}, \tau_{d,2}, \ldots\right) \left(w_d(q) \atop \sum_e \tau_{d,e}\right) \left(   \frac{v_{\ee_\G}(q)}{u_{\ee_\G}^\ty(q)\iota(\ty,q-\ee_\G)}   \right)^{-s}.
\end{split}
\]

\end{thmABC}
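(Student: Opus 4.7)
The plan is to deduce Theorem~\ref{thm:zeta.H} from Corollary~\ref{cor:zeta.G} by applying Clifford theory to the normal inclusion $\G'(\lri)\triangleleft\G(\lri)$. The quotient at level~$\ell$,
\[
Q_\ell := \G(\lri/\fp^{\ell+1})/\G'(\lri/\fp^{\ell+1}),
\]
is abelian of order $q^{\ell}(q-\ee_\G)$: the determinant identifies it with a subgroup of $(\lri/\fp^{\ell+1})^\times$ when $\G=\GL_n$, and with the norm-one subgroup of $(\Lri/\Fp^{\ell+1})^\times$ when $\G=\GU_n$. Its Pontryagin dual $\hat Q_\ell$ acts on $\Irr(\G(\lri/\fp^{\ell+1}))$ by twisting $\chi\mapsto\chi\otimes\psi$, preserving both level and type. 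Under the standing hypotheses ($p$ odd, $p\nmid n$) the relevant Schur multipliers vanish, so Clifford theory yields a multiplicity-free decomposition
\[
\chi\big|_{\G'(\lri)} = \sigma_1 + \cdots + \sigma_{r(\chi)},
\]
whose $r(\chi) = |(\hat Q_\ell)_\chi|$ irreducible constituents share the dimension $\dim(\chi)/r(\chi)$ and all have level~$\ell$.

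The key technical step is to establish, for a regular character $\chi$ of $\G(\lri)$ of type $\ty$ and level $\ell$, the identity
\[
r(\chi) = \iota(\ty, q-\ee_\G).
\]
Using Theorem~\ref{thm:construction}, $\chi$ is induced from an extension $\tilde\sigma$ of a character $\sigma$ of $\K^{m}/\K^{\ell}$ attached to the adjoint orbit of a regular element $x\in\g(\kk)$ of type~$\ty$. Twisting $\chi$ by $\psi = \lambda\circ\det$ corresponds to twisting $\tilde\sigma$ by the restriction of $\psi$ to the $\G(\lri)$-stabiliser of $\sigma$, so $\chi\otimes\psi\simeq\chi$ amounts to requiring this restricted character to be trivial. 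Up to a pro-$p$ part (on which the pairing is non-degenerate by regularity, forcing non-trivial wild twists to act non-trivially), the stabiliser projects onto the centraliser of $x$ in $\G(\kk)$. A direct computation from the explicit form of this centraliser (Proposition~\ref{prop:form.of.centralizers}) shows that the tame image of $\det$ on the centraliser is the subgroup of $\mu_{q-\ee_\G}$ of index $\iota(\ty, q-\ee_\G)$, whence the twist-stabiliser in $\hat Q_\ell$ has order exactly $\iota(\ty, q-\ee_\G)$.

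Granting this, the enumeration is purely combinatorial. Each $\hat Q_\ell$-orbit among the $N_{\G(\lri)}(\ty,\ell)$ regular characters of $\G(\lri)$ of type $\ty$ and level $\ell$ contributes $\iota(\ty,q-\ee_\G)$ distinct irreducible constituents to $\Irr(\G'(\lri))$, so
\[
N_{\G'(\lri)}(\ty,\ell) = \frac{N_{\G(\lri)}(\ty,\ell)\,\iota(\ty,q-\ee_\G)^2}{q^{\ell}(q-\ee_\G)},
\]
each of dimension $q^{\binom{n}{2}(\ell-1)}v_{\ee_\G}(q)/\bigl(u_{\ee_\G}^\ty(q)\,\iota(\ty,q-\ee_\G)\bigr)$. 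Feeding the formulae of Theorems~\ref{thm:dimensions} and~\ref{thm:numbers} into $\sum_{\ell\ge 1,\ty}N_{\G'(\lri)}(\ty,\ell)\,\dim^{-s}$ and evaluating the geometric summation
\[
\sum_{\ell\ge 1} q^{(\ell-1)n - \ell - \binom{n}{2}(\ell-1)s} = \frac{q^{-1}}{1-q^{n-1-\binom{n}{2}s}}
\]
produces the formula of Theorem~\ref{thm:zeta.H}. The substantive obstacle is the identification of $r(\chi)$ in the middle paragraph, particularly verifying that the pro-$p$ component of $\hat Q_\ell$ acts freely on the $\hat Q_\ell$-orbit of $\chi$; all other steps are standard Clifford-theoretic bookkeeping.
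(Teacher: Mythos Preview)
Your approach is correct and takes a genuinely different route from the paper's. The paper proceeds \emph{intrinsically}: it establishes (Porism~\ref{porism}) that the construction of Theorem~\ref{thm:bijection} carries over verbatim to $\G'$ when $p\nmid n$, then reruns the proofs of Theorems~\ref{thm:dimensions} and~\ref{thm:numbers} with $\G'$, $\g'$ in place of $\G$, $\g$, invoking Proposition~\ref{prop:centralizer.G'} for $|\tC_{\G'(\kk)}(x)|$ and the projection $a\mapsto a-\tfrac{1}{n}\tr(a)I$ to count orbits in $\g'(\kk)$. You instead descend from $\G$ to $\G'$ via Clifford theory for the abelian quotient $Q_\ell$. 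Both routes rest on the same arithmetic input---the image of $\det$ on $\tC_{\G(\kk)}(x)$, which is Proposition~\ref{prop:centralizer.G'}---so neither is cheaper in that respect. Your route is more conceptual and exhibits the relation between the two zeta functions directly, but the two gaps you flag are genuine and not disposed of by the phrase ``Schur multipliers vanish'': since $Q_\ell$ is not cyclic, multiplicity-freeness needs an argument. One option is the Porism itself (each Mackey summand $\Ind_{I\cap\G'}^{\G'}(\tilde\sigma|_{I\cap\G'})$ is already irreducible, so Mackey gives $et$ summands while the twist-stabiliser computation gives $e^2t$, forcing $e=1$); a Porism-free alternative is to note that the prime-to-$p$ part of $T/\G'\le Q_\ell$ is cyclic, whence $H^2(T/\G',\C^\times)$ is a $p$-group, while $e^2\mid\iota\mid q-\ee_\G$ is prime to $p$. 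The freeness of the wild twist-action amounts to checking that a nonzero scalar shift $x\mapsto x+cI$ at each congruence level moves the $\G$-orbit, which holds because conjugate regular matrices share a characteristic polynomial. The paper's direct approach sidesteps both issues at the cost of redoing the construction for $\G'$.
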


A remarkable feature of the enumerative and quantitative results above, compactly expressed by the various zeta functions,  is their uniformity.  In case $\G$ is one of the forms $\GL_n$ or $\GU_n$  the zeta function depends only on the form (through $\ee_\G$) and on the cardinality $q$ of the residue field, where the latter dependence is merely through substitution of $q$ in a finite explicit list of polynomials. In the case of $\SL_n$ or $\SU_n$ one still gets uniform behaviour with some additional sensitivity to divisors of $q-\ee_\G$.  

\subsection{Organisation of the paper} Section~\ref{sec:notation} contains notation and preliminary remarks. In Section~\ref{sec:construction} we prove an elaboration of Theorem~\ref{thm:construction}. In Section~\ref{sec:regulars.in.g.k} we describe regular elements in $\g(\kk)$ and their centralisers. In Section~\ref{sec:sim.enum.ennola}  we prove the remaining results mentioned in the introduction.

\subsection{Acknowledgements} We thank Alexander Stasinski and Shaun Stevens for sharing with us a preliminary version of their paper on regular representations. We warmly thank them and Christopher Voll for carefully reading preliminary versions of this paper and providing valuable and constructive feedback.

 
\section{Notation and preliminary remarks}\label{sec:notation} 
For a group $G$ and $g,h \in G$ we denote the group commutator $ghg^{-1}h^{-1}$ by $(g,h)$. For a Lie ring $L$ we use $[a,b]$ to denote the Lie bracket of $a,b \in L$. All our Lie rings are contained in matrix rings, in particular, the Lie bracket is induced from the associative structure, that is $[a,b]=ab-ba$. 

\subsection{Rings and fields} The rings and fields that we consider fit into the following diagram 
\[
\begin{matrix}
  \Lfi &  \supset & \Lri& \triangleright & \Fp=\pi\Lri \\
       \vert & & \vert & & \vert \\
   \lfi &  \supset & \lri & \triangleright & \fp=\pi \lri
\end{matrix}
\qquad\qquad
\begin{matrix}
  \kkq  \\ \vert  \\ \kk 
\end{matrix}
\]
Here $\lfi$ is non-Archimedean local field with ring of integers $\lri$, maximal ideal $\fp$, residue field~$\kk$ and  a fixed uniformiser $\pi$. The field $\Lfi$ is an unramified quadratic extension of $F$ with ring of integers $\Lri$. We let $\nu \in \Gal(\Lfi/\lfi)$ be the nontrivial Galois automorphism.  The residue fields have characteristic $p$ which is assumed to be odd (except in Section~\ref{sec:regulars.in.g.k}). For $\ell \in \N$ we let $\lri_\ell=\lri/\fp^\ell$ and similarly $\Lri_\ell=\Lri/\Fp^\ell$ be the finite quotients. For $m<\ell$ we have natural reduction maps $\M_n(\Lri_\ell) \to \M_n(\Lri_m)$ which we denote by $x \mapsto x_m$.

\subsection{Groups and Lie algebras} Let $\G$ denote either the general linear group or the following unitary group  associated to the extension $E/F$. Let $\circ : \M_n(\Lfi) \to \M_n(\Lfi)$ be the involution on $n \times n$ matrices defined by $(a_{ij})^\circ \mapsto (a_{ji}^{\nu})$. Then $\GU_n$ is the affine
algebraic group defined by 
the equations $x^\circ x =I$.  Let $\g$ be the Lie algebra scheme of $\G$. Then $\g$ is either $\gl_n$ or the anti-hermitian Lie
algebra $\gu_n$ defined by the equations $x+x^\circ=0$.
Let $\G'$ and $\g'$ stand for $\G \cap \SL_n$ and $\g \cap \mathfrak{sl}_n$, respectively.

\subsection{Duality for Lie rings}\label{subsec:duality.for.lie.rings}  The equivariant duality between $\g(\lri_\ell)$ and its Pontraigin dual
$\g(\lri_\ell)^\vee:=\Hom_\Z(\g(\lri_\ell),\C^\times)$ is established as follows. We fix a non-trivial additive character 
\begin{equation}\label{eq:def.of.psi}
\psi:\Lfi \to \C^\times
\end{equation} 
with $\mathrm{Ker}(\psi) = \Lri$, see e.g. \cite[Lemma~5.12 and Remark~5.13]{AKOV2}. 
We get a well-defined map
\[
\M_n(\Lri_\ell) \rightarrow \M_n(\Lri_\ell)^\vee, \quad x \mapsto \varphi_x, ~\text{where} ~\varphi_x(y)=\psi
(\pi^{-\ell}\tr(xy)).
\]
Moreover, as $\Fp^{-1} \not \subset \Ker (\psi)$, this map is an isomorphism and in fact restricts to an isomorphism $\g(\lri_\ell) \simeq \g(\lri_\ell)^\vee$ which is $\G(\lri_\ell)$-equivariant with respect to the adjoint and co-adjoint actions, see \cite[Lemma~5.12]{AKOV2}. We call a character $\varphi_x$ {\em regular} if the corresponding matrix $x$ is regular (cyclic). Note that the notion of regularity is independent of the level, namely, $x \in \M_n(\Lri_\ell)$ is a cyclic matrix if and only if any of its reductions modulo $\pi^m$ is cyclic. In such case $\tC_{\M_n(\Lri_\ell)}(x)=\Lri_\ell[x]$, namely, the centraliser of $x$ is the $\Lri_\ell$-algebra generated by $x$.

\subsection{Representations and characters} All the groups we consider are compact or moreover finite. We let $\Irr(G)$ denote the set of  characters of  continuous irreducible representations of $G$. As our groups are totally disconnected we have, in particular, that $\Irr(G)=\bigcup \Irr(G/N)$, where the union is over the finite index normal subgroups of $G$. If~$N$ is a normal finite index subgroup of $G$ and $\sigma \in \Irr(N)$ we write $I_G(\sigma)$ for the inertia group of $\sigma$ in $G$. We write $\Irr(G \mid \sigma)$ for the subset of irreducible characters of $G$ whose restriction to $N$ contains~$\sigma$.



\section{Construction of the regular representations of $\G(\lri)$}\label{sec:construction}

Throughout this section it will be convenient to descend to finite quotients and discuss representations of $\G(\lri_\ell)$ rather than representations of $\G(\lri)$ of level $\ell-1$.  For $1 \le m < \ell$ let  $\K^m_\ell=\K^m/\K^\ell=\Ker \left(\G(\lri_\ell) \to \G(\lri_m)\right)$.
The main result in this section is the following theorem which is an elaborate version of Theorem~\ref{thm:construction}.

\begin{thm}\label{thm:bijection} Let $\G$ be either $\GL_n$ or $\GU_n$ and let $\g=\mathrm{Lie}(\G)$. 
Let $\lri$ be a complete discrete valuation ring with finite residue field of odd characteristic. Let $\ell > 1$ be an integer and set $m=\lfloor \ell/2 \rfloor$. Let $\sigma$ be an irreducible character of $\K^m_\ell$ lying above a regular character of~$\K^{\ell-1}_\ell$. Then the following hold.

\begin{enumerate}

\item [(1)] The character 
$\sigma$ extends to its inertia group $I_{\G(\lri_\ell)}(\sigma)$. In particular, every such extension induces to a regular character of $\G(\lri_\ell)$. 

\item[(2)] 
The character $\sigma$ corresponds to an orbit of a regular matrix $x \in \g(\lri_{\ell-m})$ and there exists a bijection   
  \[
\Irr\left( \G(\lri_\ell) \mid \sigma \right) \simeq \tC_{\G(\lri_{m})}(x_{m})^\vee,
    \]
  where $x_m$ is the image of $x$ in $\g(\lri_m)$.

\end{enumerate}
In particular, every regular representation of $\G(\lri)$ arises in this manner for appropriate $\ell$ and~$\sigma$.

\end{thm}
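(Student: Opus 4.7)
The plan is to reduce the theorem to Clifford theory applied to the normal subgroup $\K^m_\ell \trianglelefteq \G(\lri_\ell)$, after first analysing the structure of $\K^m_\ell$ through its Lie algebra, then constructing $\sigma$ from a linear character on its centre, and finally resolving the cocycle obstruction that arises when extending $\sigma$ to its inertia group.

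The first step is to spell out the structure of $\K^m_\ell$. For $\ell=2m$ one has $[\K^m,\K^m] \subseteq \K^{2m}=\K^\ell$, so $\K^m_\ell \cong (\g(\lri_m),+)$ is abelian and $\G(\lri_\ell)$ acts on it through the adjoint action of $\G(\lri_m)$. For $\ell=2m+1$, $\K^m_\ell$ is two-step nilpotent with centre $\K^{m+1}_\ell \cong \g(\lri_m)$, abelian quotient $\K^m_\ell/\K^{m+1}_\ell \cong \g(\kk)$, and commutator pairing induced by the Lie bracket. Using the duality of Section~\ref{subsec:duality.for.lie.rings}, the hypothesis that $\sigma$ lies above a regular character of $\K^{\ell-1}_\ell$ forces its central character to be $\varphi_x$ for some $x \in \g(\lri_m)$ whose reduction $\bar x \in \g(\kk)$ is regular. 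For even $\ell$ the character $\sigma=\varphi_x$ is already linear and one sets $\tilde x = x$. For odd $\ell$, the form $(a,b) \mapsto \varphi_x([a,b])$ on $\g(\kk)$ has radical $\tC_{\g(\kk)}(\bar x)$, which by regularity has $\kk$-dimension $n$, and by Stone--von Neumann the irreducibles of $\K^m_\ell$ above $\varphi_x$ are parametrised by linear characters of that radical. Transferring such a character to $y \in \tC_{\g(\kk)}(\bar x)$ via the duality assembles $(x,y)$ into the promised lift $\tilde x = x + \pi^m y \in \g(\lri_{\ell-m})$. In both parities the action of $\G(\lri_\ell)$ on $\sigma$ factors through the adjoint action on $\tilde x$, and smoothness of the centraliser of a regular element identifies $I/\K^m_\ell$ with $\tC_{\G(\lri_m)}(x_m)$, where $I = I_{\G(\lri_\ell)}(\sigma)$.

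The main obstacle is part (1) — extending $\sigma$ from $\K^m_\ell$ to $I$. Once this is in place, Clifford theory immediately yields (2) via the bijection $\chi \mapsto \Ind_I^{\G(\lri_\ell)}(\hat\sigma \otimes \chi)$, giving $\Irr(\G(\lri_\ell)\mid\sigma) \cong (I/\K^m_\ell)^\vee \cong \tC_{\G(\lri_m)}(x_m)^\vee$. To carry out the extension I would exploit the defining property of a regular element, namely that its centraliser coincides with the algebra it generates: choose a lift $\hat x \in \g(\lri_\ell)$ of $\tilde x$, form the commutative $\lri_\ell$-algebra $A = \lri_\ell[\hat x]$, and set $T = A^\times \cap \G(\lri_\ell)$. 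Then $T$ is abelian and surjects onto $\tC_{\G(\lri_m)}(x_m)$, so $I = T \cdot \K^m_\ell$; the restriction $\sigma|_{T \cap \K^m_\ell}$ is a linear character given by the trace formula defining $\varphi_{\hat x}$, and the same formula extends it to a character of $T$. The delicate step is checking that this data glues into a well-defined character of $I$, which is equivalent to the vanishing of a $2$-cocycle on $T/(T \cap \K^m_\ell)$ — precisely the Schur multiplier considered in \cite[Conjecture~4.6.5]{MR3448171}. My argument would leverage oddness of the residual characteristic to kill sign obstructions (crucially in the unitary case, where one first works over $\Lri$ and then takes $\nu$-fixed points) and the explicit form of the trace pairing to exhibit an honest splitting. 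Finally, the closing sentence of the theorem is routine: any regular irreducible of $\G(\lri)$ of level $\ell-1$ factors through $\G(\lri_\ell)$, and the Clifford--Mackey decomposition of its restriction to $\K^m_\ell$ isolates a single $\G(\lri_\ell)$-orbit of a character of the form constructed above.
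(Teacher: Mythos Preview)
Your plan is sound for even $\ell$, and there it coincides with the paper's argument: $\sigma$ is linear, $I=\tC_{\G(\lri_\ell)}(x)\K^m_\ell$ with abelian centraliser, and gluing is immediate. The identification of $I/\K^m_\ell$ with $\tC_{\G(\lri_m)}(x_m)$ and the Clifford bookkeeping for part~(2) are also exactly as in the paper.

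The gap is in the odd case. You correctly isolate the obstruction to extending $\sigma$ as a class in $H^2\bigl(T/(T\cap\K^m_\ell),\C^\times\bigr)$, but you do not kill it: the sentence ``leverage oddness of the residual characteristic to kill sign obstructions \ldots\ and the explicit form of the trace pairing to exhibit an honest splitting'' is precisely the content of Takase's conjecture you cite, and no argument is supplied. The quotient $T/(T\cap\K^m_\ell)\cong \tC_{\G(\lri_m)}(x_m)$ is abelian but typically not cyclic, so its Schur multiplier need not vanish a priori; oddness of $p$ alone does not force the relevant cocycle to be trivial, and the trace pairing gives you a \emph{linear} character of $T$ but not an intertwiner for the $(\dim\sigma)$-dimensional projective $T$-action on the space of $\sigma$.

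The paper's route avoids computing this cocycle altogether. It invokes a standard reduction (Isaacs, Corollary~(11.31)): $\sigma$ extends to $I$ if and only if it extends over each Sylow subgroup of $I/\K^m_\ell$. Primes other than $p$ are automatic since $\deg\sigma$ is a $p$-power. For the $p$-Sylow $P_x\subset\tC_{\G(\lri_\ell)}(x)$, one uses a $p$-group fixed-point argument (Lemma~\ref{lem:stable.max.iso}) to find a $P_x$-invariant maximal isotropic subspace $\bar{\jj}_x\supset\bar{\rr}_{\bar x}$. This lets one realise $\sigma$ as induced from a $P_x$-stable \emph{linear} character of $J_x=\exp(\jj_x)$; that linear character then extends to $J_xP_x$ trivially (abelian gluing, as in your even case), and inducing to $\K^m_\ell P_x$ gives the desired extension of $\sigma$. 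This is what actually proves the conjecture you quote, and it is the missing ingredient in your outline.
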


The proof of Theorem~\ref{thm:bijection} generalises and modelled after  \cite[Section 7]{Jaikin}. In {\em loc}.\ {\em cit}.\  Jaikin-Zapirain analyses the irreducible characters of $\SL_2(\lri)$ of positive level which in this case are all regular. We require some preparations recorded in Sections~\ref{subsec:exp.and.log},~\ref{subsec:reps.of.K}, \ref{subsec:form.of.stab}~and~\ref{subsec:stable.max.isot} that allow us to prove Theorem~\ref{thm:bijection} in Section~\ref{subsec:proof.of.thm.bijection}. 

\subsection{Exponent and logarithm}\label{subsec:exp.and.log} 

 If $\lri$ has characteristic zero, the exponent and logarithm defined by the power series $\exp(x)=\sum_{r=0}^{\infty} x^r/{r !}$ and $\log(1+x)=\sum_{r=\pooja{1}}^{\infty} (-1)^rx^r/r$, respectively, define mutually inverse homeomorphisms between $\K^m$ and $\g(\pi^m \lri)$. Such a bijection fails in positive characteristic. Nevertheless, by controlling the ratio $\ell/m$ one obtains a bijection of finite subquotients in positive characteristic as well. Indeed, $\exp$ and $\log$ truncate to polynomials of degree $\lfloor (\ell-1)/m \rfloor$ and establish a bijection $\K^m_\ell \simeq \g(\pi^m\lri_\ell)$ whenever $p=\mathrm{char}(\kk) > \lfloor (\ell-1)/m \rfloor$.   

Two important special cases arise. For $\ell/2 \le m <  \ell$ the exponential map reduces to a linear polynomial ${\mathrm{exp}(x)=1+x}$ giving rise to an isomorphism of abelian groups ${\mathrm{exp}:\g(\pi^m\lri_{\ell}) \to \K_\ell^m}$. If $\ell/3 \le m <\ell/2$, the exponential map reduces to a quadratic polynomial ${\exp(x)=1+x+\frac{1}{2}x^2}$, establishing a bijection $\g(\pi^m\lri_{\ell}) \to \K_\ell^m$.  Here enters the main reason for our assumption throughout that $p >2$, though not the only one. In such case the bijection is no longer an isomorphism of groups but is not 'too far' from being an isomorphism in the sense made precise in the following Lemma (see \cite[Section 7]{Jaikin}).    

\begin{lemma}\label{lem:isom.of.modules}
Let $\ell$ and $m$ be positive integers such that $\ell/3 \le m < \ell$ and let $(M,\star)$ denote the $\G(\lri_\ell)$-module defined as follows. As a set $M=\K^m_\ell$. The abelian group structure is given by $a \star b := ab(a,b)^{-1/2}$ for all $a,b \in M$ and the $\G(\lri_\ell)$-action is the conjugation action. Then $\exp: (\g(\pi^m\lri_\ell),+) \to (M,\star)$ is an isomorphism of $\G(\lri_\ell)$-modules.

\end{lemma}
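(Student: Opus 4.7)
I would prove the lemma by directly verifying four things: $\exp$ is a bijection from $\g(\pi^m\lri_\ell)$ onto $M=\K^m_\ell$; the operation $a\star b = ab(a,b)^{-1/2}$ is well-defined on $M$; the identity $\exp(x+y) = \exp(x)\star\exp(y)$ holds; and $\exp$ intertwines the adjoint action on $\g$ with the conjugation action on $M$. Since $+$ on $\g(\pi^m\lri_\ell)$ is abelian, once a bijective homomorphism is established the abelian group structure on $(M,\star)$ is simply transported from the Lie side; in particular, associativity, identity $\exp(0)=1$, and inverses come for free and need not be checked on $M$ directly. The entire argument rests on the hypothesis $3m\ge \ell$: any product of three elements of $\pi^m\M_n(\lri_\ell)$ lies in $\pi^{3m}\M_n(\lri_\ell)=0$, which causes most higher-order terms in every expansion below to vanish.

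For the bijection, the truncations $\exp(x)=1+x+x^2/2$ and $\log(1+y)=y-y^2/2$ are mutually inverse polynomial maps between $\pi^m\M_n(\lri_\ell)$ and $1+\pi^m\M_n(\lri_\ell)$: a one-line substitution verifies $\log(\exp(x))=x$ because every cubic or higher monomial vanishes. For $\G=\GU_n$ I additionally verify that $\exp$ preserves the defining relation; writing $x^\circ=-x$ for $x\in\gu_n(\pi^m\lri_\ell)$,
\[
\exp(x)^\circ\exp(x)=(1-x+x^2/2)(1+x+x^2/2) = 1 + x^4/4 = 1,
\]
using $4m\ge \ell$. A parallel computation shows $\log$ maps $\K^m_\ell\cap \GU_n(\lri_\ell)$ into $\gu_n(\pi^m\lri_\ell)$.

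For the operation, write $a=1+a'$, $b=1+b'$ with $a',b'\in\pi^m\M_n(\lri_\ell)$ and note $a^{-1}=1-a'+(a')^2$. Discarding every product in $\pi^{3m}$, a direct expansion of $aba^{-1}b^{-1}$ yields the clean formula $(a,b) = 1 + [a',b']$, so $(a,b)\in\K^{2m}_\ell$. Since $[a',b']^2\in\pi^{4m}=0$ and $p$ is odd, the square root $(a,b)^{1/2}=1+[a',b']/2$ is unambiguously defined; this is where oddness of the residue characteristic enters. Expanding $a\star b = (1+a'+b'+a'b')(1-[a',b']/2)$ and again dropping products in $\pi^{3m}$ yields the symmetric identity
\[
a\star b = 1 + a' + b' + \tfrac{1}{2}(a'b'+b'a'),
\]
which in particular confirms commutativity of $\star$.

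To prove $\exp(x)\star\exp(y)=\exp(x+y)$, I substitute $a'=x+x^2/2$ and $b'=y+y^2/2$ into the formula above; since $x^2y$, $xy^2$ and $x^2y^2$ all vanish under $3m\ge \ell$, the products $a'b'$ and $b'a'$ collapse to $xy$ and $yx$ respectively, whence
\[
\exp(x)\star\exp(y) = 1 + (x+y) + \tfrac{1}{2}(x^2+xy+yx+y^2) = 1 + (x+y) + \tfrac{1}{2}(x+y)^2 = \exp(x+y).
\]
Equivariance is then immediate because conjugation commutes with the polynomial $\exp$: $g\exp(x)g^{-1} = 1 + gxg^{-1} + (gxg^{-1})^2/2 = \exp(gxg^{-1})$. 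The main obstacle is really the careful bookkeeping of which mixed products vanish modulo $\pi^\ell$ in the expansions of $(a,b)$, of $a\star b$, and of $\exp(x)\star\exp(y)$; all three computations depend on $3m\ge \ell$, with oddness of $p$ needed exactly to form $(a,b)^{1/2}$.
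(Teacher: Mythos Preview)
Your proof is correct and follows essentially the same route as the paper's: both establish the bijection via truncated $\exp$/$\log$, check compatibility with the involution $\circ$, justify the square root using that $p$ is odd and $(\K^m_\ell,\K^m_\ell)\subset \K^{2m}_\ell$, derive the symmetric formula $a\star b = 1 + a' + b' + \tfrac12(a'b'+b'a')$, and conclude equivariance from the fact that conjugation commutes with $\exp$. Your version is more explicit---you carry out the commutator computation $(a,b)=1+[a',b']$ and verify the homomorphism identity $\exp(x)\star\exp(y)=\exp(x+y)$ in full, whereas the paper condenses these into the single displayed formula for $(1+\pi^m y)\star(1+\pi^m z)$---but the underlying argument is the same.
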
 

\begin{proof}

Whenever $\exp$ and $\log$ converge they are mutual inverses. Therefore, our assumption on $m$, $\ell$ and the characteristic of $\Lri/\Fp$  guarantees that they establish a bijection between $\M_n(\pi^m\Lri_\ell)$  and $1+\M_n(\pi^m\Lri_\ell)$. Furthermore, since $\exp$ and $\log$ commute with the involution $\circ$ they restrict to a bijection $\g(\pi^m\lri_\ell) \simeq \K^m_\ell$.  Our assumption on the characteristic also implies that squaring is an automorphism of the subgroup $(\K^m_\ell,\K^m_\ell) \subset \K^{2m}_\ell \simeq (\g({\lri_{\ell-2m}}),+)$ so that the formula for $\star$ is well-defined. The commutativity of $\star$ follows from $(1+\pi^m y)\star(1+\pi^m z)= 1+\pi^m(y+z)+\frac{1}{2} \pi^{2m}(yz+zy)$ modulo~$\pi^\ell$.  Finally, conjugation commutes with $\exp$ and $\log$, therefore, this is an isomorphism of $\G(\lri_\ell)$-modules where the action factors through $\G(\lri_{\ell-m})$.  \qedhere

\end{proof}

\subsection{Characters of $\K^{\lfloor \ell /2 \rfloor}_\ell$}\label{subsec:reps.of.K} If $\ell=2m$ then $\K^m_{\ell} \simeq \g(\pi^m\lri_{\ell})$, consequently, the irreducible characters of $\K^m_{\ell}$ are linear characters of the form $\log^*\!\theta$ for $\theta \in \g(\pi^m\lri_{\ell})^\vee$. 
If $\ell=2m+1$ the group $\K^m_{\ell}$ is a $p$-group of nilpotency class two and its irreducible characters are well understood from various perspectives, see e.g. \cite[Proposition 8.3.3]{BF}. We describe it in terms of the present setup, as depicted in Figure~\ref{fig:1}. 
\begin{figure}[htb!]
\caption{}
\centering
 \label{fig:1}
\begin{displaymath}
\xymatrix{
\g(\kk) \ar@{-}[d] & \ar[l] \g(\pi^m\lri_\ell)   \ar@{-}[d] &  \ar[l]_{\quad \log} {\K^m_\ell}\ar@{-}[d]   \\
\overline{\jj} \ar@{-}[d] & \ar[l] {\mathfrak{j}} \ar@{-}[d] &\ar[l]_{\quad \log}  {J}\ar@{-}[d] \\
 \overline{\rr}_\xx \ar@{-}[d]  & \ar[l] {\mathfrak{r}}_\xx \ar@{-}[d]& \ar[l]_{\quad \log}   {R_\xx}\ar@{-}[d]  &          \\
(0) &\ar[l] \g(\pi^{m+1}\lri_\ell) & \ar[l]_{\quad \log}    {\K^{m+1}_\ell}
}
\end{displaymath}
 \end{figure}

The subgroup $\K^{m+1}_\ell \subset \K^m_\ell$ is central and each of its characters are of the form $\log^*\!\theta$ for some $\theta \in \g(\pi^{m+1}\lri_\ell)^\vee$. Anticipating the sequel it is convenient to think of $\theta$ as the restriction of $\varphi_x$ for some $x \in \g(\lri_\ell)$. Let
\[
B_{\theta}: \K^{m}_\ell/\K^{m+1}_\ell \times \K^{m}_\ell/\K^{m+1}_\ell \rightarrow \mathbb C^{\times}; \hspace{0.5cm} B_{\theta}(a\K^{m+1}_\ell, b\K^{m+1}_\ell) = \theta\left(
\mathrm{log}(a,b)\right).
\]
Then $B_{\theta}$ is an alternating bilinear form on $\K^{m}_\ell/\K^{m+1}_\ell \simeq \g(\kk)$ with values in $\C^\times$ rather than $\kk$. Unravelling definitions it can be described in terms of the alternating bilinear form ${\beta_{\xx} (\bar{y},\bar{z})=\mathrm{Tr}(\bar{x}[\bar{y},\bar{z}])}$ that makes the diagram 
\[
\begin{matrix}
\K^{m}_\ell/\K^{m+1}_\ell & \times & \K^{m}_\ell/\K^{m+1}_\ell & \overset{B_\theta}{\longrightarrow} & \C^\times & \\
\mid \! \wr  & & \mid \! \wr & & \uparrow & \!\! \psi\left(\pi^{-1}(\cdot)\right)  \\
\g(\kk) & \times & \g(\kk)   & \overset{\beta_\xx}{\longrightarrow} & \kk &
\end{matrix}
\]
commute, where $\xx \in \g(\kk)$, the reduction of $x$ modulo $\pi$, represents the restriction of $\theta$ to $\g(\pi^{\ell-1}\lri_\ell)$. In particular $B_\theta$ depends only on $\xx$. 
Let $\overline{\mathfrak{r}}_\xx$ denote the radical of $\beta_\xx$. Then 
\[
\begin{split}
\overline{\mathfrak{r}}_\xx&=\{\bar{y} \in\g(\kk) \mid \mathrm{Tr}(\bar{x}[\bar{y},\bar{z}])=0,~ \forall \bar{z} \in \g(\kk)\}   \qquad \text{(by definition)}\\
&=\tC_{\g(\kk)}(\bar{x}) \qquad \qquad \qquad \quad \qquad \qquad \qquad  \qquad \text{(by the non-degeneracy of the trace)}\\
&={\mathfrak{r}}_\xx/\g(\pi^{m+1}\lri_\ell) \simeq R_\xx/\K^{m+1}_\ell,
\end{split}
\]
where ${\mathfrak{r}}_\xx \subset \g(\pi^m\lri_\ell)$ is the inverse image of $\overline{\mathfrak{r}}_\xx$ under  reduction modulo $\pi^{m+1}$ and $R_{\xx}=\exp \left({\mathfrak{r}}_\xx  \right)$. The character $\theta$ can be extended to $\mathfrak{r}_\xx$ in $|\overline{\mathfrak{r}}_\xx|$-many ways. Each such extension $\theta'$ determines uniquely an irreducible character of $\K^m_\ell$ that arises as follows. The induced bilinear form on $\g(\kk)/\overline{\mathfrak{r}}_\xx$  is non-degenerate. Let $\overline{\mathfrak{j}} \supset \overline{\mathfrak{r}}_\xx$ be a maximal isotropic subspace and let $\mathfrak{j}$ be its inverse image in $\g(\pi^m\lri_\ell)$. Let $\theta''$ be an extension of $\theta'$ to $\mathfrak{j}$. Then
\begin{enumerate}

\item[(i)] The pullback $\log^*\!\theta''$ is a character of $J=\exp(\jj)$.

\item[(ii)] The character $\sigma=\Ind_{J}^{\K^m_\ell}(\log^*\!\theta'')$ is irreducible and independent of  $\jj$ and $\theta''$. 

\item[(iii)] There is a bijection between extensions $\theta'$ of $\theta$ to $\rr_\xx$ and $\Irr(\K^m_\ell \mid \theta)$.

\end{enumerate}
These are well known assertions coined as 'Heisenberg lift'. We remark that (i) follows from 
\[
\log(ab)=\log(a\star b)+\log (a,b)^{1/2}=\log(a)+\log(b)+\log (a,b)^{1/2}, \quad a,b \in \K^m_\ell,
\]
where the first equality follows from the definition of $\star$ and the fact that $(a,b)^{1/2}$ is central; the second equality follows from the fact that $\log$ is an isomorphism of modules, see Lemma~\ref{lem:isom.of.modules}. Applying $\theta''$ and restricting $a,b$ further down to be in $J$ we get  
\[
\left[\theta''\circ \log\right](ab)= \left[\theta'' \circ \log\right](a)\cdot \left[\theta''\circ \log\right](b), 
\]
since $\log (a,b)^{1/2}$ is in the kernel of $\theta$.

\subsection{Stabilisers in $\G(\lri_\ell)$ of regular characters}\label{subsec:form.of.stab}
We now recall the structure of stabilizers of regular characters which have a particularly simple description.
This is recorded in the following Lemma, the proof of which in the $\GL_n$ case is found 
in~\cite[Corollary 3.9]{HGreg}. The same proof works verbatim for the unitary case and is omitted.          

\begin{lemma}\label{lem:form.of.stab} Let $x \in \g(\lri_\ell)$ be a regular element and let $\varphi=\varphi_x \in \g(\lri_\ell)^\vee$ be the corresponding regular character. Let $r < \ell/2$ be a positive integer, let $\varphi_{ \mid \ell-r}$ denote the restriction of $\varphi$ to~$\g(\pi^{\ell-r}\lri_\ell)$ and let $\log^*(\varphi_{\mid {\ell-r}})$ be its pullback to $K^{\ell-r}_\ell$ along the $\log$ map. Then 
\[
I_{\G(\lri_\ell)}\left(\log^*(\varphi_{\mid {\ell-r}})\right)=\Stab_{\G(\lri_\ell)}(x_{\ell-r})=\tC_{\G(\lri_\ell)}(x) \K^r_\ell. 
\]
\end{lemma}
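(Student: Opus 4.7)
The plan is to split the proof into the two displayed equalities. The first comes from reading the inertia condition through the duality of Section~\ref{subsec:duality.for.lie.rings}, while the second requires a lifting argument that exploits regularity of $x$; the unitary case of the lift is where real work is needed.

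For the first equality, note that because $\ell - r > \ell/2$ the logarithm $\K^{\ell-r}_\ell \to \g(\pi^{\ell-r}\lri_\ell)$ is the affine bijection $1+y \mapsto y$ and intertwines conjugation with the adjoint action. Fixing $\log^*(\varphi_{\mid \ell-r})$ is therefore equivalent to fixing $\varphi_{\mid \ell-r}$ coadjointly, and unwinding $\varphi_x(y) = \psi(\pi^{-\ell}\tr(xy))$ the condition on $g$ becomes
$\psi\bigl(\pi^{-\ell}\tr((\mathrm{Ad}(g)x - x)y)\bigr) = 1$ for every $y \in \g(\pi^{\ell-r}\lri_\ell)$.
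Non-degeneracy of the trace pairing and $\ker\psi = \Lri$ then force $\mathrm{Ad}(g)x - x$ to lie in $\g(\pi^r\lri_\ell)$, which is precisely the stabiliser condition on the appropriate truncation of~$x$.

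For the second equality, the inclusion $\tC_{\G(\lri_\ell)}(x)\K^r_\ell \subseteq \Stab_{\G(\lri_\ell)}(x_{\ell-r})$ is immediate from the first paragraph. For the reverse inclusion, given $g$ in the stabiliser I would construct a centralising lift $c \in \tC_{\G(\lri_\ell)}(x)$ of the reduction of $g$, so that $g \in c\K^r_\ell$. By regularity, at every level $s$ one has $\tC_{\g(\lri_s)}(x_s) = \lri_s[x_s]$ (respectively $\Lri_s[x_s]$); these are commutative semi-local $\lri_s$-algebras whose reduction maps between different levels are surjective and along which units lift to units, which settles the $\GL_n$ case as in~\cite[Corollary~3.9]{HGreg}.

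In the unitary case the lift must additionally satisfy $c^\circ c = 1$, and enforcing this while staying in $\Lri_\ell[x]$ is the main obstacle. Starting from an arbitrary unit lift $\tilde c \in \Lri_\ell[x]^\times$, write $\tilde c^\circ \tilde c = 1 + \pi^r u$ with $u \in \Lri_\ell[x]$; the identity $(\tilde c^\circ \tilde c)^\circ = \tilde c^\circ \tilde c$ allows one to arrange $u^\circ = u$ after averaging, using $p$ odd. Odd characteristic also permits forming the truncated binomial series $(1+\pi^r u)^{1/2} \in \Lri_\ell[x]^\times$, which is $\circ$-symmetric because $u$ is. Setting $c = \tilde c (1+\pi^r u)^{-1/2}$ and using commutativity of $\Lri_\ell[x]$ yields $c^\circ c = 1$; this $c$ still centralises $x$ and agrees with $g$ modulo $\pi^r$, completing the lift.
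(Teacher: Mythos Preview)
Your argument is correct. The paper itself gives no proof: it cites \cite[Corollary~3.9]{HGreg} for the $\GL_n$ case and asserts that ``the same proof works verbatim for the unitary case.'' Your treatment makes explicit what that verbatim extension actually requires, namely that the surjection $\tC_{\G(\lri_\ell)}(x) \twoheadrightarrow \tC_{\G(\lri_r)}(x_r)$ continues to hold once the unitary constraint $c^\circ c = 1$ is imposed, and your square-root correction $c = \tilde c(1+\pi^r u)^{-1/2}$ inside the commutative $\circ$-stable algebra $\Lri_\ell[x]$ is a clean way to enforce it. Two small remarks: first, the symmetry $u^\circ = u$ is automatic from $(\tilde c^\circ \tilde c)^\circ = \tilde c^\circ \tilde c$, so no averaging is needed there (odd $p$ enters only to make $\binom{1/2}{k}$ integral); second, your phrase ``the appropriate truncation of $x$'' is prudent, since the condition $\mathrm{Ad}(g)x \equiv x \pmod{\pi^r}$ you derive is literally the stabiliser of $x_r$ rather than of $x_{\ell-r}$ as the lemma is stated---the middle term is not used elsewhere, and your reading is the one consistent with the outer equality.
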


Lemma~\ref{lem:form.of.stab} reveals the main feature of regular characters that simplifies their analysis; their stabilisers can be described in terms of a fixed group, namely the centraliser of a matrix  $\tC_{\G(\lri_\ell)}(x)=\Lri_\ell[x] \cap \G(\lri_\ell)$,  together with a principal congruence subgroup.

\subsection{Stable maximal isotropic subspace}\label{subsec:stable.max.isot} We require the following standard lemma (stated as Lemma~2.4 in \cite{Jaikin} and referenced to the proof of \cite[Lemma 1.4]{Howe}). For completeness we give a simple proof.

\begin{lemma}\label{lem:stable.max.iso} Let $V$ be a finite dimensional $\mathbb{F}_p$-vector space and $\alpha:V \times V \to \mathbb{F}_p$ an antisymmetric bilinear form on V. Suppose that a $p$-group $P$ is acting on $V$ and preserving~$\alpha$. Then there exists a maximal isotropic subspace $U$ of $V$ which is $P$-invariant.
\end{lemma}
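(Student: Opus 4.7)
The plan is to reduce to the non-degenerate (symplectic) case and then use a standard fixed-point argument for $p$-group actions, exploiting the fact that the number of Lagrangian subspaces of a symplectic $\mathbb{F}_p$-space is congruent to $1$ modulo $p$.

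First I would consider the radical $R=\mathrm{rad}(\alpha)=\{v \in V \mid \alpha(v,w)=0~\forall w \in V\}$. Since $P$ preserves $\alpha$, the subspace $R$ is $P$-invariant. A short check shows that every maximal isotropic subspace $U$ of $V$ contains $R$: if $v \in R \setminus U$ then $U+\mathbb{F}_p v$ would still be isotropic, contradicting maximality. Hence maximal isotropic subspaces of $V$ correspond bijectively and $P$-equivariantly to Lagrangian subspaces of the quotient $\overline{V}=V/R$, on which the induced form $\overline{\alpha}$ is non-degenerate. Thus we may, and will, assume $\alpha$ is a non-degenerate symplectic form on $V$, of dimension $2n$ say.

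Next I would bring in the Lagrangian Grassmannian $\mathcal{L}(V)$ of maximal isotropic subspaces. By a standard counting argument (choosing a symplectic basis step by step, or an induction on $n$ via a hyperbolic plane decomposition) one has
\[
|\mathcal{L}(V)|=\prod_{i=1}^{n}(p^{i}+1).
\]
Each factor $p^{i}+1$ is congruent to $1$ modulo $p$, so $|\mathcal{L}(V)|\equiv 1 \pmod{p}$. The group $P$ preserves $\alpha$, hence acts on the finite set $\mathcal{L}(V)$. Every $P$-orbit on $\mathcal{L}(V)$ has cardinality a power of $p$; since $|\mathcal{L}(V)|$ is not divisible by $p$, at least one orbit must have cardinality $1$. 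Such a fixed point is precisely a $P$-invariant maximal isotropic subspace, completing the proof.

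The main (and really only) potentially delicate step is the congruence $|\mathcal{L}(V)|\equiv 1 \pmod p$; once this classical Lagrangian count is accepted, the rest of the argument is a direct orbit-counting observation.
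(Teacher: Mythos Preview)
Your argument is correct but proceeds quite differently from the paper's. The paper argues by induction on $\dim V$: a $p$-group acting linearly on a nonzero $\mathbb{F}_p$-space has a nonzero fixed vector $v$; one then passes to $v^{\perp}/\mathbb{F}_p v$ with the induced form and applies the inductive hypothesis, pulling back the resulting subspace. No reduction to the non-degenerate case is made and no enumeration of Lagrangians is needed---the only fixed-point input is the elementary fact that $|V|\equiv 0\pmod p$ forces a nontrivial fixed vector. Your route, by contrast, is a single global orbit-counting argument on the Lagrangian Grassmannian after reducing modulo the radical; it is slick and non-inductive, but it imports the classical formula $|\mathcal{L}(V)|=\prod_{i=1}^{n}(p^{i}+1)$ as a black box. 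The paper's approach buys elementarity and a more constructive flavour; yours buys brevity once the Lagrangian count is granted. Both are standard and either would be acceptable here.
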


\begin{proof} We use induction on $\dim V$.  If $\dim V =1$ then $\alpha$ is the zero form and there is nothing to prove. Since $|V|$ and $|P|$ are powers of $p$ and $0 \in V$ is a fixed point for $P$,  there is a nonzero $P$-fixed vector $v \in V$. Let $W$ be the subspace of all vectors in $V$ that are orthogonal to $v$. Since $v \in W$ we may look at the quotient $\overline{W}:=W/\mathbb{F}_pv$.  The form $\alpha$ descends to a form $\overline{\alpha}$ on $\overline{W}$ and is preserved by the induced $P$-action. By induction there is a maximal isotropic $P$-stable subspace $\overline{U}$ in $\overline{W}$. Its inverse image in $V$ is the required subspace for $(V,\alpha)$.   \qedhere

\end{proof}

\subsection{Proof of Theorem~\ref{thm:bijection}}\label{subsec:proof.of.thm.bijection} 
The proof for the case $\ell=2m$ is straightforward. Let $\sigma$ be an irreducible character of $\K^m_\ell$ lying above a regular character of $\K^{\ell-1}_\ell$, that is, $\sigma$ is  the pullback under $\log$ of a restriction of $\varphi=\varphi_x$ for a regular element $x$ in $\g(\lri_\ell)$.

\begin{figure}[htb!]
\centering
 \caption{Even case $\ell=2m$}
\label{fig:ell.even}
\begin{displaymath}
\xymatrix{
    &    &     {\G(\lri_\ell)}\ar@{-}[d]   &      \\
& & {I_{\G(\lri_\ell)}(\sigma)}\ar@{-}[dr]\ar@{-}[dl] & \\ 
 \g(\lri_{m}) \simeq {\g(\pi^{m}\lri_\ell)} & \ar[l]_{\quad \qquad \log} {\K^{m}_\ell}\ar@{-}[dr]& & \tC_{\G(\lri_\ell)}(x)\ar@{-}[dl]\\
 & & \K^m_\ell \cap \tC_{\G(\lri_\ell)}(x) \ar@{-}[d] &\\
&&{\{1\}}& }
\end{displaymath}
\end{figure}
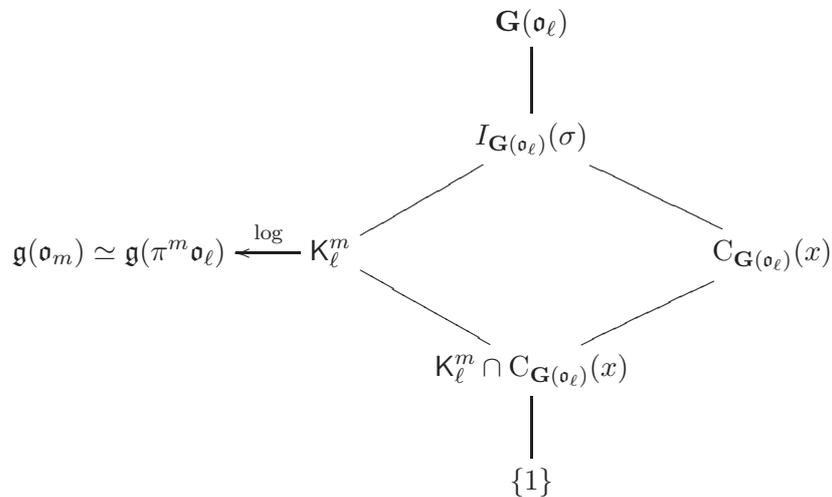

Lemma~\ref{lem:form.of.stab} guarantees that $I_{\G(\lri_\ell)}(\sigma)=\tC_{\G(\lri_\ell)}(x)\K^m_\ell$ is a product of abelian groups, where $\tC_{\G(\lri_\ell)}(x)$ normalises $\K^m_\ell$ and stabilises $\sigma$. The various subgroups are depicted in Figure~\ref{fig:ell.even}. 
It follows that every character of $\tC_{\G(\lri_\ell)}(x)$ that agrees with $\sigma$ on 
$\K^m_\ell \cap \tC_{\G(\lri_\ell)}(x)$ glues with $\sigma$ to a linear character of $I_{\G(\lri_\ell)}(\sigma)$ that extends $\sigma$. By Clifford theory every such extension induces irreducibly to $\G(\lri_\ell)$, proving the first assertion. The number of such extensions is $I_{\G(\lri_\ell)}(\sigma)/\K^m_\ell \simeq \tC_{\G(\lri_m)}(x_m)$, and the second assertion follows. 

\smallskip

Assume now that $\ell=2m+1$. We refer the reader to Figure~\ref{fig:ell.odd} which features all the main players of the construction: linear algebra over $\kk$, Lie theory over $\lri$ and group theory. 

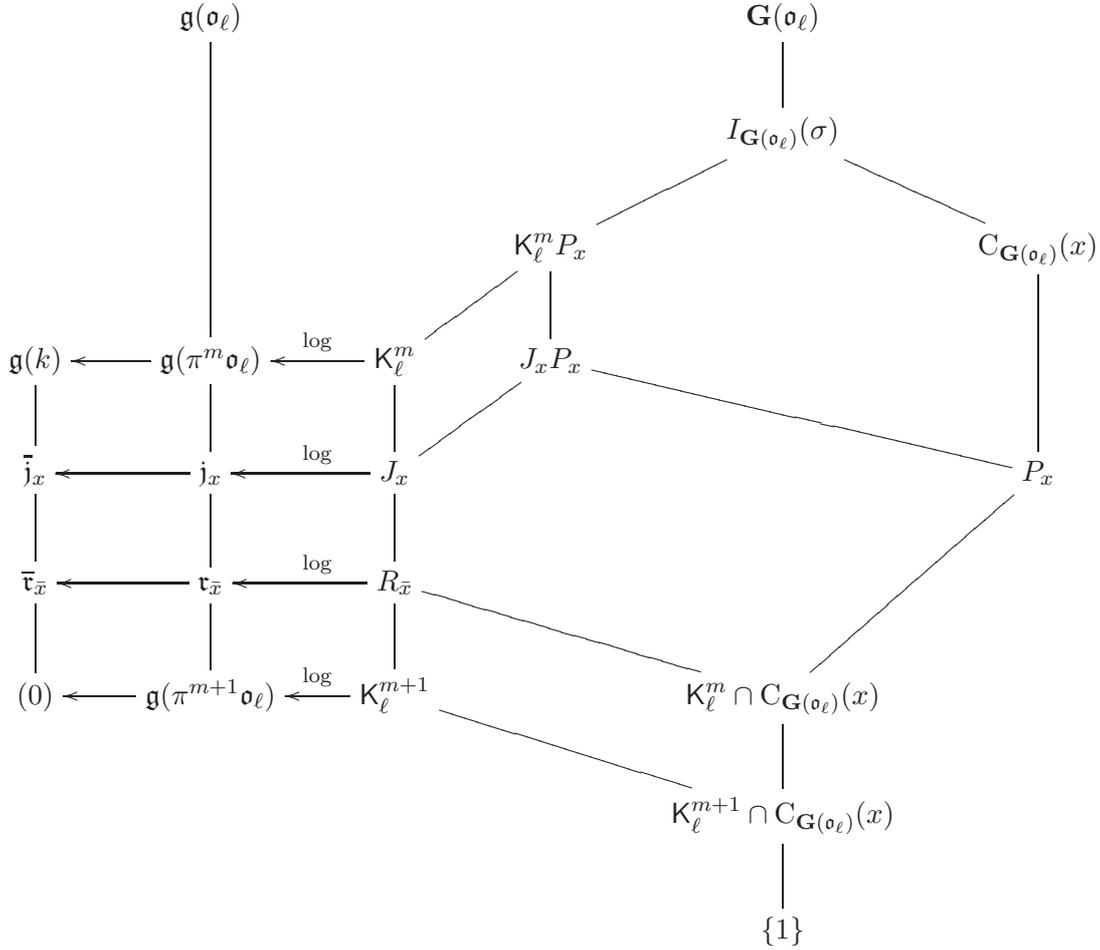
\begin{figure}[htb!]
\centering
 \caption{Odd case $\ell=2m+1$}
 \label{fig:ell.odd}
\begin{displaymath}
\xymatrix{
  & \g(\lri_\ell)\ar@{-}[ddd] &                                &       &     {\G(\lri_\ell)}\ar@{-}[d]       &             \\
   & &                              &    &            {I_{\G(\lri_\ell)}(\sigma)}\ar@{-}[dr] \ar@{-}[dl]     &      \\ 
   & &                              & \K_{\ell}^m P_x \ar@{-}[dl]    \ar@{-}[d]  & & \tC_{\G(\lri_\ell)}(x)\ar@{-}[dd]                     \\   
\g(\kk) \ar@{-}[d] & \ar[l] \g(\pi^m\lri_\ell)   \ar@{-}[d] &  \ar[l]_{\quad \log} {\K^m_\ell}\ar@{-}[d]  &   \ar@{-}[dl]   J_x P_x       \ar@{-}[drr]  &  & \\
\overline{\jj}_x  \ar@{-}[d] & \ar[l] {\mathfrak{j}}_x \ar@{-}[d] &\ar[l]_{\quad \log}  {J_x}\ar@{-}[d]  &                                            &    & P_x \ar@{-}[ddl]  \\
 \overline{\rr}_\xx \ar@{-}[d]  & \ar[l] {\mathfrak{r}}_\xx \ar@{-}[d]& \ar[l]_{\quad \log}   {R_\xx}\ar@{-}[d]  \ar@{-}[drr] &                                            &     \\
(0) &\ar[l] \g(\pi^{m+1}\lri_\ell) & \ar[l]_{\quad \log}    {\K^{m+1}_\ell}\ar@{-}[drr]  &  & {\K^{m}_\ell \cap \tC_{\G(\lri_\ell)}(x)}  \ar@{-}[d]          &\\
  && && {\K^{m+1}_\ell \cap \tC_{\G(\lri_\ell)}(x)}\ar@{-}[d] & \\
   &&  & & {\{1\}}&
}
\end{displaymath}
 \end{figure}

Let $\sigma$ be an irreducible character of $\K^m_\ell$ lying above a regular linear character $\log^*\!\theta$ of~$\K^{m+1}_\ell$, for $\theta \in \g(\pi^{m+1}\lri_\ell)^\vee$, such that $\theta_{\mid \g(\pi^{\ell-1}\lri_\ell)}$ corresponds to a regular element $\bar{x} \in \g(\kk)$. We claim that there exists a lift $x \in \g(\lri_\ell)$ of $\bar{x}$ such that 
\begin{equation}\label{eq:stab.sigma}
I_{\G(\lri_\ell)}(\sigma)=\tC_{\G(\lri_\ell)}(x)\K^m_\ell.
\end{equation}

Indeed, let $\theta'$ be the unique extension of $\theta$ to $\rr_\xx$ that corresponds to $\sigma$; see Section~\ref{subsec:reps.of.K}. Choose $x \in \g(\lri_\ell)$ such that  $\varphi=\varphi_x \in \g(\lri_\ell)^\vee$ satisfies $\varphi_{\mid \rr_\xx}=\theta'$. Since $x$ is regular we may apply Lemma~\ref{lem:form.of.stab} with $r=m$ and $\theta=\varphi_{\mid \g(\pi^{m+1}\lri_\ell)}$, 
and get 
\[ 
I_{\G(\lri_\ell)}(\sigma) \subset I_{\G(\lri_\ell)}(\log^*\theta) =\tC_{\G(\lri_\ell)}(x)\K^m_\ell.
\] 
For the reverse inclusion recall that $\theta'$ determines $\sigma$ and vice versa, therefore
\[
I_{\G(\lri_\ell)}(\sigma) \supset I_{ I_{\G(\lri_\ell)}(\log^*\!\theta)}(\sigma) =I_{ I_{\G(\lri_\ell)}(\log^*\!\theta)}(\log^*\!\theta') \supset \tC_{\G(\lri_\ell)}(x)\K^m_\ell,
\]  
and \eqref{eq:stab.sigma} follows; we remark that the last inclusion, which {\em a posteriori} is equality, follows from the fact that $\tC_{\G(\lri_\ell)}(x)$ stabilises $\varphi_x$ and in particular it stabilises its restriction~$\theta'$, and the fact that $\K^m_\ell$ stabilises $\theta'$ by the Heisenberg lift construction.  

We now show that $\sigma$ extends to its inertia group $I_{\G(\lri_\ell)}(\sigma)$. By Corollary (11.31) in \cite{Isaacs} it is enough to show that $\sigma$ extends to all the subgroups $Q$ such that $Q/\K^m_\ell$ is a Sylow subgroup of $I_{\G(\lri_\ell)}(\sigma)/\K^m_\ell$. For all primes  other than $p$ the representation $\sigma$ extends to the corresponding Sylow subgroup $Q$ since $Q/\K^m_\ell$ is prime to $p$. Let $P_x$ denote the $p$-Sylow subgroup of $\tC_{\G(\lri_\ell)}(x)$. By Lemma \ref{lem:stable.max.iso} there exists a maximal isotropic subspace of $\overline{\rr}_\xx \subset \overline{\jj}_x \subset \g(\kk)$ which is $P_x$-invariant. It follows that its inverse image $\jj_x$ and $\varphi_{\mid \jj_x}$, both of which are $P_x$-invariant, may be used to construct $\sigma$ as in Section~\ref{subsec:reps.of.K}(ii).   
Since $P_x$ is abelian and fixes both $J_x$ and the character $\log^*(\varphi_{\mid \jj_x}):J_x \to \C^\times$, the restriction of $\varphi$ to $J_x \cap P_x=R_\xx \cap \tC_{\G(\lri_\ell)}(x)$,  extends to $P_x$ and any such extension glues to a linear character $\omega$ of $J_x P_x$. The character $\sigma':=\Ind_{J_x P_x}^{\K^m_\ell P_x}(\omega)$ has degree 
$$\sigma'(1)=[\K^m_\ell P_x :J_xP_x ]=[\K^m_\ell:J_x]=\sigma(1)$$ and is therefore an extension of $\sigma$.

It follows that $\sigma$ extends to its inertia group and induces irreducibly to $\G(\lri_\ell)$, proving the first assertion.  To prove the second assertion we note that the possible extensions $\sigma'$ of $\sigma$ are in bijection with $\tC_{\G(\lri_\ell)}(x)/\K^m_\ell \cap \tC_{\G(\lri_\ell)}(x) \simeq \tC_{\G(\lri_m)}(x_m)$.{\hfill $\square$}

\begin{remark} We note that the order of choices made in the proof of Theorem~\ref{thm:bijection} is crucial. We start with $\sigma$, which determines $\theta'$, and only then choose $x$, which determines the extension $\varphi=\varphi_x$. The choice of $x$ determines the decomposition of $I_{G(\lri_\ell)}(\sigma)$ as a product of the centralizer of $x$ and the congruence subgroup and, in particular, determines~$P_x$. Only then $\jj_x$ is chosen as a submodule fixed by $P_x$. 
\end{remark}

\begin{porism}\label{porism} Under the additional assumption that $p=\mathrm{char}(\kk) \nmid n$, Theorem~\ref{thm:bijection} holds for $\SL_n$ and $\SU_n$.

\end{porism}

\begin{proof} If $(p,n)=1$ the determinant map $\det:\K_ \ell^{j} \to (1+\pi^j\Lri_\ell)$ splits for every $1 \le j \le \ell$, since $\K_ \ell^{1}$ is a $p$-group and the restriction of the determinant map to scalar matrices is the invertible map $aI \mapsto a^n$.  It follows that $$\K^j_\ell \simeq (\K')_\ell^j \times \{\text{scalars}\},$$
where $(\K')_\ell^j =\K^j_\ell  \cap \SL_n(\Lri_\ell)$. 
\smallskip

Consequently  $\sigma':=\Res^{\K^m_\ell}_{(\K')_\ell^m}(\sigma)$ is irreducible and all irreducible representations of $(\K')_\ell^m$ that lie above a regular linear character are obtained in this way. Moreover
\[ 
I_{\G'(\lri_\ell)}(\sigma')= I_{\G(\lri_\ell)}(\sigma)  \cap \SL_n(\Lri_\ell),   
\]
hence the extendability of $\sigma'$ to its inertia follows from the extendability of $\sigma$ to its inertia.
\end{proof}


\section{Regular elements in $\g(\kk)$}\label{sec:regulars.in.g.k}

In this section there is no restriction on the characteristic of $\kk$.
Let $\kkalg$ be an algebraic closure of $\kk$. For $d \in \N$, let $\kkd \subset \kkalg$ denote the unique extension of $\kk$ of degree $d$.  Let $\nu \in \Gal(\kkalg/\kk)$ be the Frobenius automorphism $\nu(x)=x^q$. We maintain the notation $\circ$ for the restriction of $\nu$ to $\kkq$ and its extension to an involution of $\M_n(\kkq)$ as a $\kk$-algebra.

\subsection{Description and enumeration of regular elements in $\g(\kk)$}   Let $\sim$ be the multiplicative involution on
$\kkq[t]$ defined by 
\[
h(t) =\sum_i c_i t^i \mapsto \tilde{h}(t)=\sum_i (-1)^{\deg(h)-i}c_i^\circ t^i.
\]
By \cite[Lemma 3.5]{AKOV2} a regular matrix in $\M_n(\kkq)$ is similar to an anti-hermitian matrix if and only if its characteristic polynomial is fixed under this involution. Let 
\[
\begin{split}
\cP_{\gl}(\kkd)&=\{ f \in \kkd[t] \mid \text{$f$ monic and irreducible}\}, ~\text{and}\\
\cP_{\gu}(\kk)&=\{ f \in \kkq[t] \mid \text{$f \in \cP_{\gl}(\kkq)$ and $\tilde{f}=f$, or $f=g \cdot \tilde{g}$ with  $g \ne \tilde{g} \in \cP_{\gl}(\kkq)$}\}.
\end{split}
\]
Let $\cM_\g(\kk)$ be the multiplicative monoid generated by $\cP_\g(\kk)$. It follows that for every $a \in \g(\kk)$, its characteristic polynomial $\Delta_a$ can be written uniquely (up to reordering) in the form $\Delta_a= \prod_{i} f_i^{e_i}$, for some finite list of elements $f_i \in \cP_\g(\kk)$ with $\sum_{i} \deg(f_i) e_i=n$, namely, that $\Delta_a \in \cM_\g(\kk)$.

\begin{definition}\label{def:n.typical.matrix}
We call a matrix $\ty \in M_{n}(\mathbb{Z}_{\ge 0})$ $n$-typical if
$\sum_{d,e} d e \ty_{d,e}=n$. We denote the set of $n$-typical matrices by $\cA_n$.
\end{definition}

For $h \in \cM_\g(\kk)$ of degree $n$ let $\ty_{d,e}(h)$ be the number of distinct polynomials $f \in \cP_\g(\kk)$ occurring in $h$ such that $d=\deg(f)$ and $e$ is the multiplicity
of $f$ in $h$. We get an $n$-typical matrix which we call the {\em type} of
$h$. For $\ty \in \cA_n$ let $\cM_\g(\kk)^\tau$ be the subset of polynomials in $\cM_\g(\kk)$ of type $\tau$. Let 
\[
w_d(x)=\frac{1}{d}\sum_{m|d}\mu(d/m)x^{m} \in \Q[x].
\]

\begin{proposition}\label{prop:number.of.typical.pol} Let $n \in \N$ and assume that $|\kk|=q > n$. For any $\ty \in \cA_n$ we have 
\[
\left| \cM_\g(\kk)^\ty \right| = \prod_{d } \left(\sum_{e} \tau_{d,e} \atop \tau_{d,1}, \tau_{d,2}, \ldots\right) \left(w_d(q) \atop \sum_e \tau_{d,e}\right).
\]
In particular $\left| \cM_\gu(\kk)^\ty \right|=\left| \cM_\gl(\kk)^\ty \right|$.
\end{proposition}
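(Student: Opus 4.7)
The plan is to enumerate $\cM_\gl(\kk)^\ty$ directly via the combinatorics of monic irreducibles over $\kk$, and to reduce the $\gu$ case to the $\gl$ case by exhibiting a degree-preserving bijection between $\cP_\gu(\kk)$ and the set of monic irreducibles over $\kk$. The in-particular statement will then be automatic.

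For the $\gl$ case, a direct enumeration suffices. An element $h \in \cM_\gl(\kk)^\ty$ is a multiset of pairs $(f_i, e_i)$ with the $f_i \in \cP_\gl(\kk)$ distinct, satisfying the type constraint. For each degree $d$, I would choose which $\sum_e \ty_{d,e}$ of the $w_d(q)$ monic irreducibles of degree $d$ over $\kk$ appear, then partition them into classes of sizes $\ty_{d,1}, \ty_{d,2}, \ldots$ labelled by the multiplicity. This produces the factor $\binom{w_d(q)}{\sum_e \ty_{d,e}}\binom{\sum_e \ty_{d,e}}{\ty_{d,1}, \ty_{d,2}, \ldots}$, and multiplying over $d$ gives the formula. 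The hypothesis $q > n$ ensures $w_d(q) \ge \sum_e \ty_{d,e}$ whenever the latter is nonzero, keeping the binomials well-defined.

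For the $\gu$ case, the central step is to prove that the number of elements of $\cP_\gu(\kk)$ of polynomial degree $d$ (as elements of $\kkq[t]$) is again $w_d(q)$; the $\gl$ enumeration then transfers verbatim, yielding the same formula. To this end, I would introduce the map $\vartheta : \kkalg \to \kkalg$ defined by $\vartheta(\alpha) = -\alpha^q$, which satisfies $\vartheta^2 = \mathrm{Frob}_{q^2}$. Every $\vartheta$-orbit $\Omega \subset \kkalg$ is then either a single $\mathrm{Frob}_{q^2}$-orbit (this forces $|\Omega|$ odd, since a $d$-cycle in $S_d$ admits a square root in $S_d$only for $d$ odd), or the disjoint union of two $\mathrm{Frob}_{q^2}$-orbits of equal size (and then $|\Omega|$ is even). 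Assigning to each $\vartheta$-orbit $\Omega$ the polynomial $\prod_{\alpha \in \Omega}(t-\alpha) \in \kkq[t]$ produces, respectively, a monic irreducible fixed by $\sim$ (case~(a) of $\cP_\gu(\kk)$) or a product $g\tilde{g}$ with $g \ne \tilde{g} \in \cP_\gl(\kkq)$ (case~(b)); in both situations the resulting polynomial has degree exactly $|\Omega|$. This is a degree-preserving bijection onto $\cP_\gu(\kk)$, so it remains only to count $\vartheta$-orbits of each size.

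To count $\vartheta$-orbits of size $d$, I would set $F_m = \{\alpha \in \kkalg : \vartheta^m(\alpha) = \alpha\}$ and use $|F_m| = \sum_{d \mid m} d \cdot a_d$ (where $a_d$ denotes the number of orbits of size $d$), so that Möbius inversion reduces everything to the claim $|F_m| = q^m$. For $m$ even, $\vartheta^m = \mathrm{Frob}_{q^m}$ gives $F_m = \kk_m$ immediately. For $m$ odd, $F_m$ is the solution set of $\alpha^{q^m} + \alpha = 0$: in characteristic $2$ this collapses to $\kk_m$; in odd characteristic, the nonzero solutions are those $\alpha \in \kk_{2m}^\times$ with $\alpha^{q^m-1} = -1$, and a standard Lagrange count in the cyclic group $\kk_{2m}^\times$ (the $2(q^m-1)$ elements with $\alpha^{2(q^m-1)}=1$, minus the $q^m-1$ with $\alpha^{q^m-1}=1$) produces $q^m - 1$ nonzero solutions, whence $|F_m| = q^m$ again. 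Möbius inversion then yields $a_d = w_d(q)$, completing the reduction. The main, though routine, obstacle is this uniform handling of all residue characteristics, as the section imposes no restriction on $p$.
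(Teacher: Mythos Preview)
Your proof is correct and follows a genuinely different route from the paper's. Both arguments handle the $\gl$ case by the same direct enumeration. For $\gu$, the paper splits on the parity of $d$: for odd $d$ it invokes Lemma~\ref{lem:analog.of.ennola} (an anti-hermitian analogue of a lemma of Ennola, proved via the Cayley transform with a separate treatment of even $q$) to identify self-$\sim$ irreducibles of degree $d$ with $\Gal(\kkqd/\kkq)$-orbits inside $\ker\tr_{\kkqd\mid\kkd}$, and then counts by inclusion--exclusion; for even $d$ it pairs off irreducibles $g$ with $\tilde g$ and appeals to the identity $w_{d/2}(q^2)=2w_d(q)$. Your $\vartheta$-orbit bijection collapses this case analysis: once one checks (a one-line calculation you leave implicit) that $\widetilde{\prod_j(t-\alpha_j)}=\prod_j(t-\vartheta(\alpha_j))$, the one-versus-two $\mathrm{Frob}_{q^2}$-orbit dichotomy encodes the odd/even-$d$ split, and the single uniform count $|F_m|=q^m$ feeds straight into M\"obius inversion. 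You thereby recover the odd-degree conclusion of Lemma~\ref{lem:analog.of.ennola} from elementary cycle combinatorics, with no Cayley map and no external reference. The paper's approach buys the extra structural fact that roots of self-$\sim$ irreducibles lie in the trace kernel; yours buys a self-contained argument uniform in both the parity of $d$ and the residue characteristic, and it sidesteps the bookkeeping needed in the even case to exclude the self-$\sim$ $g$'s of degree $d/2$.
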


To prove Proposition~\ref{prop:number.of.typical.pol} we require the following Lemma which is the anti-hermitian analogue of \cite[Lemma~2]{Ennola-unitary-conj}.

\begin{lemma}\label{lem:analog.of.ennola} Let $f(t) \in \kkq[t]$ be irreducible with $\tilde{f}(t)=f(t)$. Then the degree of $f(t)$ is odd and any root $\xi$ of $f(t)$ is in the kernel of the trace map $\tr_{\kkqd \mid \kkd} : \kkqd \to \kkd$.
\end{lemma}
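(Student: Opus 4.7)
The plan is to translate the hypothesis $\tilde{f}=f$ into a multiplicative relation satisfied by any root $\xi\in\kkqd$, and then to extract both the parity of $\deg f$ and the vanishing of the trace from a reduction of that relation modulo $q^d+1$.

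First I would verify directly from the definition of the involution that $\tilde{f}(t)=(-1)^{\deg f}f^\circ(-t)$, where $f^\circ$ is obtained from $f$ by applying the Frobenius $c\mapsto c^q$ to each coefficient. Since the roots of $f^\circ$ are precisely the $q$-th powers of the roots of $f$, the hypothesis $\tilde{f}=f$ forces $-\xi$ to be one of these $q$-th powers. Invoking that $\Gal(\kkqd/\kkq)$ is cyclic with generator $\eta\mapsto\eta^{q^2}$ and acts transitively on the roots of $f$, I can write the root in question as $\xi^{q^{2k}}$ for some $k\ge 0$, producing
\[
\xi^{q^m}=-\xi,\qquad m:=2k+1,
\]
with $m$ odd. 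Applying Frobenius $q^m$ to this identity then yields $\xi^{q^{2m}}=\xi$, so $\xi\in\F_{q^{2m}}$; combined with $\kkqd=\kkq(\xi)$ this gives $\kkqd\subseteq\F_{q^{2m}}$, hence $d\mid m$, and the oddness of $m$ passes to~$d$.

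For the trace, I would write $m=de$ with $e$ necessarily odd and set $y:=\xi^{q^d-1}$. Since $\xi\in\kkqd^\times$ one has $y^{q^d+1}=\xi^{q^{2d}-1}=1$; on the other hand $q^m-1=(q^d-1)S$ for $S=1+q^d+q^{2d}+\cdots+q^{d(e-1)}$, so $y^S=\xi^{q^m-1}=-1$. The crux is now to reduce $S$ modulo $q^d+1$: since $q^d\equiv-1\pmod{q^d+1}$ and $e$ is odd, the alternating sum collapses to $S\equiv 1\pmod{q^d+1}$, whence $y^S=y$. Equating the two evaluations of $y^S$ forces $y=-1$, i.e.\ $\xi^{q^d}=-\xi$, and therefore $\tr_{\kkqd\mid\kkd}(\xi)=\xi+\xi^{q^d}=0$.

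The main technical manoeuvre here is the reduction modulo $q^d+1$ via the alternating pattern induced by $q^d\equiv-1$; beyond that the argument is bookkeeping. The only genuine edge case I anticipate is residual characteristic two, where $-1=1$ collapses several steps; in that setting the same chain of identities still yields $\xi\in\F_{q^d}$, and the trace vanishes automatically because $\xi+\xi=0$.
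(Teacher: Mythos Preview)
Your argument is correct and takes a genuinely different, more elementary route than the paper's proof. The paper splits into two cases: for odd $q$ it realises $\xi$ inside $\M_d(\kkq)$ as an anti-hermitian matrix, identifies $\circ$ on the centraliser $\kkq[\xi]\simeq\kkqd$ with the field automorphism $\eta\mapsto\eta^{q^d}$, and then invokes the Cayley map together with Ennola's unitary lemma to get the parity of $d$; for even $q$ it runs a congruence argument close in spirit to yours. By contrast, you work entirely inside the field $\kkqd$ and handle both parities in one stroke: from $\tilde f=f$ you extract $\xi^{q^m}=-\xi$ with $m$ odd, deduce $d\mid m$ (hence $d$ odd), and then force $\xi^{q^d}=-\xi$ via the reduction $S\equiv 1\pmod{q^d+1}$. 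What your approach buys is self-containment and uniformity---no matrix model, no Cayley map, no external citation---while the paper's matrix-theoretic proof makes the link to the anti-hermitian condition $\xi+\xi^\circ=0$ more transparent and fits the ambient framework of the paper.

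Two small remarks. First, since the roots of $f$ are $\xi^{q^{2j}}$ with $0\le j\le d-1$, one may take $0\le k\le d-1$, whence $m=2k+1<2d$; together with $d\mid m$ this already gives $m=d$, so $\xi^{q^d}=-\xi$ follows immediately and the $S$-reduction, while correct, is not actually needed. Second, the division by $\xi$ implicit in $\xi^{q^m-1}=-1$ tacitly excludes the trivial case $f(t)=t$, which you may want to note; and in characteristic two your sketch is right but should spell out why $d$ is odd (if $d$ were even then $\kkq\subseteq\kkd$ would force $[\kkq(\xi):\kkq]\le d/2$, contradicting irreducibility).
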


\begin{proof}

Assume first that $q$ is odd.  Consider the $\circ$-map on $R=\M_d(\kkq)$. For every matrix $b \in R$ we get an involution of $\kk$-algebras $\circ: \tC_R(b) \to \tC_R(b^\circ)$. By~\cite[Lemma 3.5]{AKOV2} there exists $\xi \in R$ whose characteristic polynomial is $f(t)$ and $\xi+\xi^\circ=0$. This, combined with the assumption that $f$ is irreducible, means that $\tC_R(\xi)=\tC_R(\xi^\circ)=\kkq[\xi] \simeq \kkqd$. We thus have the following commutative diagram
\[
\begin{matrix} R & \overset{\circ}{\longrightarrow} & R \\
\cup & & \cup \\
\tC_R(\xi) & \longrightarrow &\tC_R(\xi)\end{matrix}
\]
and the restriction of $\circ$ to $\tC_R(\xi)$ is a field automorphism of order two fixing $\kk$ and hence of the form $\eta \mapsto \eta^{q^d}$ for all $\eta \in \tC_R(\xi)$. It follows that $\tr_{\kkqd \mid \kkd}(\xi)=\xi+\xi^{q^d}=\xi+\xi^\circ=0$. 

It remains to show that $d$ is odd. We use the unitary version \cite[Lemma 2]{Ennola-unitary-conj} and the Cayley map. Recall the Cayley map $\mathrm{Cay}: \gu_d \to \GU_d$ defined by $a \mapsto (1+a)(1-a)^{-1}$.  This is a rational map defined only on elements $a$ that do not have $-1$ as an eigenvalue. Nevertheless, on elements of interest to us, namely generators of $\kkqd$ as an $\kkq$-algebra, it is always defined. Observing that $\mathrm{Cay}$ commutes with the Galois action we deduce that $\xi$ generates $\kkqd$ over $\kkq$ if and only if $\mathrm{Cay}(\xi)$ does. The fact that $d$ is odd now follows from the unitary case.  


Assume now that $q$ is even. Recall that on $\kkq$ the Galois action is $x^\circ=x^q$. The condition $f(t) = \tilde{f}(t)$ implies that both $\xi$ and $\xi^q$ are roots of $f(t)$. All the roots of the irreducible polynomial $f(t)$ are conjugate under the group generated by $x \rightarrow x^{q^2}$ and therefore 
$\xi^{q^{2i}} = \xi^q$ for some $i$ satisfying $1 \leq i < d$, where $d = \deg(f)$. This, along with the fact that the exponent of $\xi$ is coprime to $q$,  implies that $\xi^{q^{2i-1 }-1} = 1$. As $\xi$ is a root of an irreducible polynomial of degree $d$ we also have $\xi^{q^{2d}-1} = 1$. Let $e$ be the exponent of $\xi$. We have 
\begin{equation}\label{eqn:cong}
	q^{2d} \equiv 1 \pmod{e} \quad \mathrm{and} \quad q^{2i-1} \equiv 1  \pmod{e}.
\end{equation}
Let $r = \gcd(2i-1, 2d )$. Since $r$ is odd it divides $d$ and therefore we get $q^d \equiv 1 \pmod{e}$. This implies that $\xi^{q^d-1}=1$ and hence $\xi \in \kkd$. It follows that $d$ is odd as otherwise $\xi$ lies in a proper subfield of $\kkqd$ that contains $\kkq$, in contradiction to $\xi$ being a root of degree~$d$ irreducible polynomial over $\kkq$. Finally, we have  $\mathrm{tr}_{\kkqd \mid \kkd} (\xi)=\xi^{q^d}+\xi=2\xi=0$. This completes the proof for $q$ even. \qedhere

\end{proof}

\begin{proof}[Proof of Proposition~\ref{prop:number.of.typical.pol}] First, assume that $\ty_{n,1}=1$ and all other entries are zero, that is, $\ty$ corresponds to an irreducible polynomial of degree $n$. Then, it is well known that $w_n(q)$ is the number of such polynomials over $\kk$, that is $\left|\cM_\gl(\kk)^\ty\right|=w_n(q)$.  For the unitary counterpart we consider the odd and even cases separately. If $n$ is odd then we need to count the number of monic irreducible polynomials of degree $n$ over $\kkq$ which satisfy $\tilde{f}(t)=f(t)$. By Lemma~\ref{lem:analog.of.ennola} we need to count the number of elements in $\kkqn$ which are in the kernel of the trace map $\tr_{\kkqn \mid \kkn}$ and divide by $n$. Thus, as in the derivation of $w_n(q)$, an inclusion-exclusion argument gives that $\left|\cM_\gl(\kk)^\ty\right|=w_n(q)$. If $n$ is even we need to count the number of polynomials $f(t)=g(t)\tilde{g}(t)$ with $g(t)$ and $\tilde{g}(t)$ distinct irreducible polynomials of degree $n/2$ over $\kkq$. The number of such $g$'s is $w_{\frac{n}{2}}(q^2)=2w_n(q)$. Taking into consideration the symmetry between $g$ and $\tilde{g}$ we divide by $2$ and the result follows. The formula for general $\ty$ now follows by multiplying over the various possibilities for the irreducible constituents and making sure that there are no repeated polynomials except for those dictated by $\ty$.  \qedhere

\end{proof}

\subsection{Centralisers in $\G(\kk)$ of regular elements of $\g(\kk)$}

The following proposition describes the structure of centralisers of regular elements. The type of a regular element in $\g(\kk)$ is the type of its characteristic polynomial. We say that a type $\ty \in \cA_n$ is $(d,e)$-primary if $\ty_{d,e}=1$ for $d,e$ with $de=n$ and zero otherwise. That is, the characteristic polynomial of $a$ is of the form $\Delta_a(t)=f(t)^e$ with $f(t) \in \cP_\g(\kk)$ of degree $d$. Let $\HH_{\epsilon,d}$ be the following algebraic group 
\[
\HH_{\epsilon,d}=\left\{
\begin{array}{cl}
 \GL_1, & \text{if $\epsilon=1$ (independent of $d$), or $\epsilon=-1$ and $d$ is even},\\
\GU_{1, \kkqd\mid\kkd}, & \text{if $\epsilon=-1$ and $d$ is odd},
\end{array} \right.
\]
where $\GU_{1, \kkqd\mid\kkd}$ denotes the one-dimensional unitary group associated to the quadratic extension $\kkqd / \kkd$. 
\begin{proposition}\label{prop:form.of.centralizers} Let $\tC_{\G(\kk)}(a)$ denote the centraliser in $\G(\kk)$  of a regular element $a \in \g(\kk)$ of type $\ty$. Then 
\[
\tC_{\G(\kk)}(a) \simeq \prod_{d,e} \mathbf{H}_{\ee_\G,d}\left(\kkd[t]/(t^e)\right)^{\ty_{d,e}}.
\]
In particular, the cardinality of the centraliser is $u_{\ee_\G}^\ty (q)$.

\end{proposition}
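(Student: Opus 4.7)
The plan is to exploit the cyclicity of the regular element $a$: since $a$ is cyclic, its centraliser in the appropriate ambient matrix algebra is the commutative subalgebra $K[a]$ it generates, where $K = \kk$ for $\G = \GL_n$ and $K = \kkq$ for $\G = \GU_n$. Consequently $\tC_{\GL_n(\kk)}(a) = (\kk[a])^\times$ and $\tC_{\GU_n(\kk)}(a) = \{x \in \kkq[a] : x^\circ x = 1\}$. Factoring $\Delta_a = \prod_j g_j^{e_j}$ into distinct monic irreducibles over $K$, the Chinese remainder theorem yields $K[a] \simeq \prod_j K[t]/g_j^{e_j}$, and Hensel's lemma (valid since $K$ is perfect, so each $g_j$ is separable) identifies each local factor with $L_j[s]/(s^{e_j})$, where $L_j = K[t]/g_j$ is the residue field.

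In the general linear case this concludes the structural part: grouping the factors by $(d,e) = (\deg g_j, e_j)$ yields
\[
\tC_{\GL_n(\kk)}(a) \simeq \prod_{d,e} \bigl((\kkd[s]/(s^e))^\times\bigr)^{\tau_{d,e}} = \prod_{d,e} \HH_{1,d}(\kkd[s]/(s^e))^{\tau_{d,e}},
\]
since $\HH_{1,d} = \GL_1$.

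For the unitary case, the factorisation of $\Delta_a$ over $\kkq$ refines its $\cP_\gu(\kk)$-factorisation: a factor $f \in \cP_\gu(\kk)$ of type (a) (irreducible over $\kkq$ with $\tilde f = f$, hence of odd degree by Lemma~\ref{lem:analog.of.ennola}) contributes a single CRT summand fixed by the $\circ$-involution, while a factor $f = g \cdot \tilde g$ of type (b) contributes a pair of summands swapped by $\circ$. On a swapped pair, the unitary equation $x^\circ x = 1$ reduces to determining the second coordinate as $(x_1^\circ)^{-1}$, so the contribution is $(\kkq[t]/g^e)^\times \simeq (\kkd[s]/(s^e))^\times = \HH_{-1,d}(\kkd[s]/(s^e))$, using $\kkq[t]/g \simeq \F_{q^{2\deg g}}$ together with $d = \deg f = 2\deg g$ even. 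For a fixed summand of odd degree $d$, the crucial step is to identify the involution $\sigma$ induced by $\circ$ on the residue field $L = \kkq[t]/g \simeq \kkqd$. A direct computation using Lemma~\ref{lem:analog.of.ennola} (namely $\alpha^{q^d} = -\alpha$ for any root $\alpha$ of $g$) shows that $\sigma$ sends $\alpha$ to $-\alpha^q = \alpha^{q^{d+1}}$, the unique element of order $2$ in $\Gal(\kkqd/\kk)$, hence the nontrivial Galois involution of $\kkqd/\kkd$. After rescaling the uniformiser by an anti-invariant element of $\kkqd^\times$ (available in odd characteristic), one obtains an involution-equivariant isomorphism of the local factor with $\kkqd[s]/(s^e)$, on which $\sigma$ acts by Galois on $\kkqd$ and fixes $s$; the unitary group is then $\GU_{1,\kkqd|\kkd}(\kkd[s]/(s^e)) = \HH_{-1,d}(\kkd[s]/(s^e))$.

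Assembling the contributions, $\tC_{\G(\kk)}(a) \simeq \prod_{d,e} \HH_{\ee_\G,d}(\kkd[s]/(s^e))^{\tau_{d,e}}$. The order formula then follows from a standard filtration argument: the reduction $\kkd[s]/(s^e) \twoheadrightarrow \kkd$ induces a surjection on $\HH_{\ee_\G,d}$-points with kernel of order $q^{d(e-1)}$ (obtained by counting solutions of the defining equation in each successive quotient $s^i/s^{i+1}$), while the residue-level group has order $q^d - \ee_\G^d$ (namely $|\kkd^\times|$ for $\GL_1$, and $|\GU_{1,\kkqd|\kkd}(\kkd)| = q^d+1$ when $\ee_\G = -1$ and $d$ is odd). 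Thus $|\HH_{\ee_\G,d}(\kkd[s]/(s^e))| = q^{de}(1 - \ee_\G^d q^{-d})$, and taking the product, using $\sum_{d,e} de\, \tau_{d,e} = n$, yields $u^\tau_{\ee_\G}(q)$. The main technical obstacle is the identification of $\sigma$ on the residue field in the fixed unitary case, where Lemma~\ref{lem:analog.of.ennola} is indispensable; once that is made, the remaining ingredients are routine structure theory of local Artinian rings with involution and of norm-one tori.
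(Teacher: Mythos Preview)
Your argument is correct and follows essentially the paper's route: both reduce to the commutative centraliser $K[a]$, split it into primary factors, and identify each local piece with a truncated polynomial ring over the appropriate residue field; in the unitary case the paper packages the local identification via the additive Jordan decomposition $a=S+N$ (so $\kkq[S]\simeq\kkqd$ and $N$ is the uniformiser) and, for even $d$, an explicit block-diagonal model, whereas you track the involution through the CRT decomposition purely algebraically --- the substance is the same. One slip to fix: the matrix involution sends $a$ to $a^\circ=-a$, so on $\kkq[a]$ the induced $\sigma$ sends the image $\alpha$ of $a$ to $-\alpha$, not $-\alpha^q$; moreover $x\mapsto x^{q^{d+1}}$ has order $d$ (not $2$) in $\Gal(\kkqd/\kk)$ when $d$ is odd. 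Your conclusion is unaffected, since any nontrivial $\kk$-linear involution of $\kkqd$ is necessarily $x\mapsto x^{q^d}$, and Lemma~\ref{lem:analog.of.ennola} confirms $-\alpha=\alpha^{q^d}$ on the nose.
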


\begin{proof}  Let $a \in \g(\kk)$ be regular of type $\ty$. Assume first that $\ty$ is $(d,e)$-primary and write $\Delta_a(t)=f(t)^e$ with $f(t) \in \cP_\g(\kk)$ of degree $d$. If $\g=\gl_n$ then $f$ is irreducible over $\kk$ and $\tC_{\M_n(\kk)}(a) = \kk[a] \simeq \kkd[u]/(u^e)$ and the result  follows by taking units. 

If $\g=\gu_n$ then $f$ is of degree $d$ over $\kkq$ and satisfies $f = \tilde{f}$. Assume first that $d$ is odd which means that $f$ is irreducible.  Write $a=S+N$, the additive Jordan decomposition of $a$, with $S$ semisimple and $N$ nilpotent such that $[S,N]=0$. Then we have that $S,N \in \g(\kk)$, and $\tC_{\M_n(\kkq)}(a)=\kkq[a]=\kkq[S][N] \simeq \kkqd[u]/(u^e)$.  A short computation shows that intersecting the latter with $\G(\kk)$ gives $\tC_{\G(\kk)}(a) \simeq \GU_{1,\kkqd/\kkd}(\kk[u]/(u^e))$. Now assume that $d$ is even and so $f$ is a product of two distinct irreducible polynomials $g$ and $\tilde{g}$ of degree $d/2$.  Let $\alpha \in \M_{d/2}(\kkq)$ be a root of $g(t)$ so that $\kkd \simeq \kkq[\alpha] \hookrightarrow \M_{d/2}(\kkq)$. Then $-\alpha^\circ$ is a root of $\tilde{g}(t)$ and $a$ is similar in $\M_2\!\left(\M_{e}(\M_{d/2}(\kkq))\right)=\M_n(\kkq)$
 to the block diagonal matrix $\mathrm{diag}(J_e(\alpha),-J_e(\alpha)^\circ)$, where
\begin{equation*}
  J_e(\alpha)=\left(
    \begin{array}{ccccc}
      \alpha & \mathrm{I}      &    &     0\\
      0      & \alpha & \ddots &   \\
      \vdots & \ddots & \ddots & \mathrm{I}\\
      0 & \cdots & 0 &  \alpha\\
        \end{array}
  \right) 
  . 
\end{equation*}
A short computation shows that $\tC_{\G(\kk)}(a) \simeq \GL_1(\kkd[u]/(u^e))$.

Finally, for general $\ty$, if $\prod f_i(t)^{r_i}$ is a decomposition with distinct $f_i \in \cP_\g(\kk)$, then the centraliser of $a$ is the product of the centralisers of the primary parts.  \qedhere

\end{proof}

\subsection{Centralisers in $\G(\kk) \cap \SL_n(\kkq)$ of regular elements in $\g(\kk)$} Let $\G' : = \G \cap \SL_n$. Recall that for $\ty \in \cA_n$, with associated characteristic polynomial $\prod_{i=1}^m f_i(t)^{r_i}$ such that $f_i \in \cP_\g(\kk)$ are distinct, we defined $\iota(\ty,q-\ee)=\gcd(r_1,...,r_m,q-\ee)$.

\begin{proposition}\label{prop:centralizer.G'} Let $a \in \g(\kk)$ be a regular element of type $\ty \in \cA_n$.  Then 
\begin{equation*}
|\tC_{\G(\kk)}(a):\tC_{\G'(\kk)}(a)|= \frac{q-\ee_\G}{\iota(\ty, q-\ee_\G)}.
\end{equation*}
\end{proposition}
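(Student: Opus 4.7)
The plan is to identify $|\tC_{\G(\kk)}(a):\tC_{\G'(\kk)}(a)|$ with the order of the image of $\det$ on $\tC_{\G(\kk)}(a)$, since $\tC_{\G'(\kk)}(a)$ is its kernel. This image is a subgroup of $\mu := \det(\G(\kk))$, which is cyclic of order $q-\ee_\G$: namely $\mu = \kk^\times$ when $\G=\GL_n$, and $\mu = \{z\in\kkq^\times : z^{q+1}=1\}$ when $\G=\GU_n$.

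By Proposition~\ref{prop:form.of.centralizers}, after conjugation in $\G(\kk)$ (which preserves both $\det$ and $\G'$), I may take $\tC_{\G(\kk)}(a) = \prod_{d,e} \mathbf{H}_{\ee_\G,d}(\kkd[t]/(t^e))^{\ty_{d,e}}$. The reduction $\mathbf{H}_{\ee_\G,d}(\kkd[t]/(t^e)) \twoheadrightarrow \mathbf{H}_{\ee_\G,d}(\kkd)$ modulo $t$ has a unipotent kernel (its elements act on the ambient space as $1+N$ with $N$ nilpotent) and so this kernel has trivial determinant. The determinant therefore factors through $\prod_{d,e}\mathbf{H}_{\ee_\G,d}(\kkd)^{\ty_{d,e}}$, and it suffices to show that for each $(d,e)$ with $\ty_{d,e}\neq 0$ the image of $\det$ on the factor $\mathbf{H}_{\ee_\G,d}(\kkd)$ coincides with the subgroup $\mu^e$ of $e$-th powers.

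This amounts to three short norm computations. \emph{When $\G=\GL_n$,} the factor $\kkd^\times$ acts on $\kkd^e\simeq\kk^{de}$ by left multiplication and contributes $N_{\kkd/\kk}(x)^e$; surjectivity of $N_{\kkd/\kk}$ yields image $(\kk^\times)^e=\mu^e$. \emph{When $\G=\GU_n$ and $d$ is odd,} the factor is the norm-one subgroup of $\kkqd^\times$ and contributes $N_{\kkqd/\kkq}(x)^e$; the congruence $1+q+\cdots+q^{d-1}\equiv 1\pmod{q+1}$ (valid for $d$ odd) implies that $N_{\kkqd/\kkq}$ restricts to a surjection onto $\mu$, giving image $\mu^e$. \emph{When $\G=\GU_n$ and $d$ is even,} I would realise the $(d,e)$-primary block using the anti-diagonal hermitian form $h = \bigl(\begin{smallmatrix}0&I\\I&0\end{smallmatrix}\bigr)$ that pairs the $\alpha$- and $-\alpha^\circ$-eigenspaces coming from the factorisation $f = g\tilde g$ over $\kkq$; a centralising $\mathrm{diag}(A,D)\in\GU$ is then forced to satisfy $D = (A^\circ)^{-1}$, so $\det(\mathrm{diag}(A,D)) = \det(A)^{1-q} = N_{\kkd/\kkq}(A_0)^{e(1-q)}$, and combining surjectivity of $N_{\kkd/\kkq}:\kkd^\times\twoheadrightarrow\kkq^\times$ with the identity $(\kkq^\times)^{q-1}=\mu$ again gives image $\mu^e$.

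Finally, the image of $\det$ on the entire centraliser is $\prod\mu^e = \mu^{g}$, where $g = \gcd\{e : \exists d \text{ with } \ty_{d,e}\neq 0\}$. Since $\mu$ is cyclic of order $q-\ee_\G$, this subgroup has order $(q-\ee_\G)/\gcd(g,q-\ee_\G) = (q-\ee_\G)/\iota(\ty,q-\ee_\G)$, completing the proof. I expect the main obstacle to be the $\GU_n$ case with $d$ even: one must pick a hermitian model adapted to the factorisation $f=g\tilde g$ in order to cleanly expose the twisting factor $1-q$ arising from $D=(A^\circ)^{-1}$.
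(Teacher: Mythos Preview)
Your proposal is correct and follows essentially the same route as the paper: identify the index with the size of the image of $\det$ on $\tC_{\G(\kk)}(a)$ inside the cyclic group $\G_1(\kk)$ of order $q-\ee_\G$, use the primary decomposition of the centraliser from Proposition~\ref{prop:form.of.centralizers}, show that each $(d,e)$-primary block contributes exactly the subgroup of $e$-th powers, and conclude via the $\gcd$. The only cosmetic differences are that the paper invokes Hilbert~90 for the odd-$d$ unitary case where you give the explicit congruence $1+q+\cdots+q^{d-1}\equiv 1\pmod{q+1}$, and for even $d$ the paper writes the determinant directly as $\alpha\mapsto\alpha\cdot\alpha^{-q}$ while you derive the same twist $1-q$ from the anti-diagonal hermitian model; both amount to the same computation.
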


\begin{proof}  Let $\G_1$ be either $\GL_1$ or $\GU_1$ according to $\G$ being $\GL_n$ or $\GU_n$, respectively. We have an exact sequence 
\begin{equation}\label{exact.seq.of.centralizers}
1 \to \tC_{\G'(\kk)}(a) \hookrightarrow \tC_{\G(\kk)}(a) \overset{\det}{\longrightarrow} \G_1(\kk).
\end{equation}
As $|\G_1(\kk)|=q-\ee_\G$, we may equally well show that the size of the co-kerel of the determinant map is $\iota(\ty, q-\ee_G)$. Suppose first that $\ty$ is $(n,1)$-primary. In such case the statement boils down to exactness of the rightmost arrow in \eqref{exact.seq.of.centralizers}. If $\g=\gl_n$ this means that~$\ty$ corresponds to an irreducible polynomial of degree $n$ over $\kk$. In such case $\tC_{\G(\kk)}(a) \simeq \kkn^\times$ and the determinant map identifies with the norm map $N_{\kkn \mid \kk}: \kkn^\times \to \kk^\times$ which is surjective.  If $\g=\gu_n$ we consider the two cases according to the parity of $n$. If $n$ is odd then $\tC_{\GU_n(\kk)}(a) \simeq \ker N_{\kkqn \mid \kkn}$ and the determinant map identifies with the norm map $N_{\kkqn \mid \kkq}$. Its surjectivity on $\GU_1(\kk) = \ker N_{\kkq \mid \kk}$ follows from the surjectivity of the norm maps and Hilbert 90. For $n$ even the characteristic polynomial of $a$ is a product $g(t)\tilde{g}(t)$, with $g(t) \ne \tilde{g}(t)$ irreducible polynomials of degree $n/2$ over $\kkq$. We have $\tC_{\G(\kk)}(a) \simeq \kkn^\times$ and the determinant map coincides with $\alpha \mapsto \alpha \cdot \alpha^{-q}$ which is, again, a surjection on $\GU_1(\kk)$. 

For general $\ty$ we get the following diagram 
\[
\begin{matrix}
\prod_{d,e} \HH_{\ee_\G,d}\left(\kkd[t]/(t^e)\right)^{\times \ty_{d,e}} & \simeq & \tC_{\G(\kk)}(a) & \subset & \G(\kk) \\
\eta \twoheaddownarrow  & &\twoheaddownarrow & & \twoheaddownarrow \det \\
\prod_{d,e} \left(\G_1(\kk)^{e}\right)^{\times \ty_{d,e}} & \overset{\phi}{\twoheadrightarrow} & \G_1(\kk)^{\gcd \{e \,\mid\, \exists d ~\ty_{d,e} \ne 0\}} & \subset & \G_1(\kk), 
\end{matrix}
\]
where the map $\eta$ is composition of reduction modulo the radical and component-wise application of appropriate norm maps; the map $\phi$ is the group multiplication. For every primary factor $f(t)^e$ of the characteristic polynomial $\Delta_a(t)$ of $a$, the image of the restriction of the determinant map to the corresponding block in $\tC_{\G(\kk)}(a)$ is the subgroup of $e$-powers $\G_1(\kk)^e$ in the cyclic group $\G_1(\kk)$. Going over all the possible exponents $e$ that occur in $\tau$ we get that the image of the determinant map is $\G_1(\kk)^{\gcd \{e \,\mid\, \exists d ~\ty_{d,e} \ne 0\}}$, as required.   \qedhere

\end{proof}

\section{Dimensions, enumeration and Ennola duality}\label{sec:sim.enum.ennola}

We are now in a position to prove the remaining theorems and the corollary stated in the introduction.

\begin{proof}[Proof of Theorem~\ref{thm:dimensions}] We use the proof of Theorem~\ref{thm:bijection}. Regular characters of $\G(\lri)$ of level $\ell$ are characters of $\G(\lri_{\ell+1})$. Let $\chi$ be such character. 

Assume first that $\ell+1=2m$. Then $\chi$ is induced from a linear character of the stabiliser $I_{\G(\lri_{2m})}(\sigma)=\tC_{\G(\lri_{2m})}(x)\K^m_{2m}$ that extends some linear character $\sigma$ of $\K^m_{2m}$ and $x \in \g(\lri_{2m})$, see Figure~\ref{fig:ell.even}. The degree is therefore 
\begin{equation}\label{eq:dim.rho.even}
\begin{split}
\chi(1) &= [\G(\lri_{2m}): I_{\G(\lri_{2m})}(\sigma)]=\frac{|\G(\lri_{m})|}{|\tC_{\G(\lri_m)}({x}_m)|} \\ &=\frac{q^{n^2(m-1)}|\G(\kk)|}{q^{n(m-1)}|\tC_{\G(\kk)}({x}_1)|}=q^{\left( n \atop 2 \right)(\ell-1)} \frac{v_{\ee_\G}(q)}{u_{\ee_\G}^\ty(q)},
\end{split}
\end{equation}
where $x_j$ denotes the reduction of $x$ modulo $\pi^j$ and $\ty$ is the type of ${x}_1$. 

\smallskip

If $\ell+1=2m+1$, the construction differs by a Heisenberg lift, that is, $\chi$ is induced from a character of $I_{\G(\lri_{\ell+1})}(\sigma)$ of degree $q^{\left( n \atop 2 \right)}$, yielding
\begin{equation}\label{eq:dim.rho.odd}
\begin{split}
\chi(1) &= q^{\left( n \atop 2 \right)}[\G(\lri_{\ell+1}): I_{\G(\lri_{\ell+1})}(\sigma)] =q^{\left( n \atop 2 \right)}\frac{|\G(\lri_{m})|}{|\tC_{\G(\lri_m)}({x}_m)|} \\ &=q^{\left( n \atop 2 \right) (2m-2 +1)}\frac{|\G(\kk)|}{|\tC_{\G(\kk)}({x}_1)|}=q^{\left( n \atop 2 \right)(\ell-1)} \frac{v_{\ee_\G}(q)}{u_{\ee_\G}^\ty(q)}.
\end{split}
\end{equation} \qedhere

\end{proof}

\begin{proof}[Proof of Theorem~\ref{thm:numbers}]

Let $\ty \in \cA_n$ be a type. By Proposition~\ref{prop:number.of.typical.pol} the number of distinct polynomials of type $\ty$  is 
\[
\left| \cM_\g(\kk)^\ty \right|=\prod_{d } \left(\sum_{e} \tau_{d,e} \atop \tau_{d,1}, \tau_{d,2}, \ldots\right) \left(w_d(q) \atop \sum_e \tau_{d,e}\right). 
\]

It follows that this is the number of $\G(\kk)$-orbits of regular elements in $\g(\kk)$ of type $\ty$, and hence also the number of $\G(\kk)$-orbits of regular characters in $\g(\kk)^\vee$ of type $\ty$, see Section~\ref{subsec:duality.for.lie.rings}. 

Assume first that $\ell+1=2m$. Characters in each of these orbits, viewed as characters of $\K^{\ell}/\K^{\ell+1}$, extend to $\K^m/\K^{\ell+1}$. By regularity, the number of inequivalent extensions of each character of $\K^{\ell}/\K^{\ell+1}$ is $q^{n(\ell-m)}$.  By Theorem~\ref{thm:bijection} they all extend further to the stabiliser~in 
\[
\left|I_{\G(\lri_{\ell+1})}(\sigma)/\K^m_{\ell+1}\right|=q^{n(m-1)}u_{\ee_\G}^\ty(q)
\]
different ways, and they all induce to inequivalent irreducible character. It follows that the number of regular representations of $\G(\lri_{\ell+1})$ of type $\ty$ is  
\begin{equation}\label{eq:counting.irreps.of.type.ty}
\begin{split}
\mathrm{Irr}(\G(\lri_{\ell+1}); \ty) & =  q^{n(m-1)} \cdot \left| I_{\G(\lri_{\ell+1})}(\sigma) /\K^m_{\ell+1} \right| \cdot \left| \cM_\g(\kk)^\ty \right|  \\ &=q^{n(\ell-1)}u_{\ee_\G}(q) \prod_{d } \left(\sum_{e} \tau_{d,e} \atop \tau_{d,1}, \tau_{d,2}, \ldots\right) \left(w_d(q) \atop \sum_e \tau_{d,e}\right).
\end{split}
\end{equation}

Similar considerations give the formula for $\ell+1=2m+1$ odd. The only twist is that one needs to take into account the $q^n$ inequivalent extensions of linear characters of $K^{m+1}_{\ell+1}$ to $R_\xx$, see Figure~\ref{fig:ell.odd}.    \qedhere
 
\end{proof}

\begin{proof}[Proof of Corollary~\ref{cor:zeta.G}] Combining Theorems~\ref{thm:dimensions}~and~\ref{thm:numbers} gives
\[
\begin{split}
\zeta^{\mathrm{reg}}_{\G(\lri_{\ell+1})}(s) &= \sum_{\ty \in \cA_n} \sum_{\chi \in \mathrm{Irr}(\G(\lri_{\ell+1}); \ty)} \chi(1)^{-s} \\
&=q^{(\ell-1)\left(n-\left(n \atop 2\right) s\right)}\sum_{\ty \in \cA_n} u_{\ee_\G}^\ty(q) \prod_{d } \left(\sum_{e} \tau_{d,e} \atop \tau_{d,1}, \tau_{d,2}, \ldots\right) \left(w_d(q) \atop \sum_e \tau_{d,e}\right) \frac{v_{\ee_\G}(q)^{-s}}{u_{\ee_\G}^\ty(q)^{-s}}.
\end{split}
\]

The corollary follows by $\zeta^{\mathrm{reg}}_{\G(\lri)}(s)=\sum_{\ell=1}^\infty \zeta^{\mathrm{reg}}_{\G(\lri_{\ell+1})}(s)$. \qedhere

\end{proof}

\begin{proof}[Proof of Theorem~\ref{thm:zeta.H}] By Porism~\ref{porism} the construction theorem (Theorem~\ref{thm:bijection}) continues to hold upon replacing $\G$ by $\G'$. To get the regular representation zeta function of $\G'(\lri)$ we follow the proofs of Theorems~\ref{thm:dimensions}~and~\ref{thm:numbers} and replace the occurrences of $\G$ by $\G'$ and of $\g$ by $\g':=\g \cap \mathfrak{sl}_n$. We now address the main points that need further justification.

\smallskip

First, the occurrences of $|\G(\kk)|/|\tC_{\G(\kk)}({x}_1)|$ in equations \eqref{eq:dim.rho.even} and  \eqref{eq:dim.rho.odd} are replaced by $|\G'(\kk)|/|\tC_{\G'(\kk)}({x}_1)|$. To justify the latter we need to show that the restriction of the reduction map  $\G(\lri_m) \to \G(\kk)$ to 
\begin{equation}\label{eqn:surjectivitiy.of.centralisers}
\tC_{\G'(\lri_m)}(x) \to \tC_{\G'(\kk)}(x_1)
\end{equation}
is surjective. Since the map $\tC_{\G(\lri_m)}(x) \to \tC_{\G(\kk)}(x_1)$ is surjective, for every $g_1 \in \tC_{\G'(\kk)}(x_1)$ there exists a preimage $g \in \tC_{\G(\lri_m)}(x)$. This means that $\det(g) \in 1+\pi\Lri_m$. Since $p$ is prime to $n$ we can find a scalar matrix $aI \in \K^1_m$ with $\det(g)=\det (aI)$ so that $g'=ga^{-1}I \in  \tC_{\G'(\lri_m)}(x)$ and $g'  \equiv_\pi g_1$, hence the subjectivity of the reduction map~\eqref{eqn:surjectivitiy.of.centralisers}.

Therefore, consulting Proposition~\ref{prop:centralizer.G'}, we deduce that the degree of a regular character $\chi$ of $\G'(\lri_{\ell+1})$ lying above characters of type $\ty$ is
\[
\chi(1)=q^{\left( n \atop 2 \right)(\ell-1)} \frac{v_{\ee_\G}(q)}{u_{\ee_\G}^\ty(q) \iota(\ty,q-\ee_\G)}.
\]

\smallskip

Second, we address the enumeration of such characters. The trace form on $\M_n(\kkq)$ restricts to a non-degenerate paring between $\g'(\kk)$ and $\g'(\kk)^\vee$ as long as we require that the characteristic is prime to $n$. We then get an equivariant isomorphism between the two. The well-defined surjective map $\g(\kk) \to \g'(\kk)$ given by $a \mapsto a -\frac{1}{n}\tr(a)I_n$,  implies that  the types that occur in $\g'(\kk)$ are the same as those occurring in $\g(\kk)$, however, the number of classes of every type $\ty$ is reduced by a factor of $q$, namely
\begin{equation}\label{eq:types.for.g'}
\left| \cM_{\g'}(\kk)^\ty \right|=q^{-1} \left| \cM_{\g}(\kk)^\ty \right|.
\end{equation} 
We also note that each $\G(\kk)$-class in $\g'(\kk)$ splits into $\iota(\ty,q-\ee_\G)$ distinct $\G'(\kk)$-classes of equal cardinality since for every $a \in \g(\kk)$ we have 
\[
\frac{\left|\G(\kk)a\right|}{\left|\G'(\kk)a\right|}=\frac{[\G(\kk):\tC_{\G(\kk)}(a)]}{[\G'(\kk):\tC_{\G'(\kk)}(a)]}=\iota(\ty,q-\ee_\G).
\]

The number of such regular characters is given by the following $\G'$-analogue of \eqref{eq:counting.irreps.of.type.ty} 
\begin{equation*}\label{eq:counting.irreps.of.type.ty.G'}
\begin{split}
\mathrm{Irr}(\G'(\lri_{\ell+1}); \ty) & =  q^{(n-1)(m-1)} \cdot \left| I_{\G'(\lri_{\ell+1})}(\sigma) /(\K')^m_{\ell+1} \right| \cdot \left| \cM_{\g'}(\kk)^\ty \right|  \\ &=q^{(n-1)(\ell-1)}u_{\ee_\G}(q) \frac{\iota(\ty,q-\ee_\G)}{q-\ee_\G} \cdot q^{-1}\prod_{d } \left(\sum_{e} \tau_{d,e} \atop \tau_{d,1}, \tau_{d,2}, \ldots\right) \left(w_d(q) \atop \sum_e \tau_{d,e}\right).
\end{split}
\end{equation*}

Consequently, the regular zeta function of $\G'(\lri_{\ell+1})$ is 
\[
\begin{split}
\zeta^{\mathrm{reg}}_{\G'(\lri_{\ell+1})}(s) &= \sum_{\ty \in \cA_n} \iota(\ty,q-\ee_\G) \sum_{\chi \in \mathrm{Irr}(\G'(\lri_{\ell+1}); \ty)} \chi(1)^{-s} \\
&=q^{(\ell-1)\left(n-1-\left(n \atop 2\right) s\right)}\sum_{\ty \in \cA_n} \frac{\iota(\ty,q-\ee_\G)^2}{(q-\ee_\G)} u_{\ee_\G}^\ty(q) \\ & \qquad \qquad \qquad  \times q^{-1}\prod_{d } \left(\sum_{e} \tau_{d,e} \atop \tau_{d,1}, \tau_{d,2}, \ldots\right) \left(w_d(q) \atop \sum_e \tau_{d,e}\right) \left(   \frac{v_{\ee_\G}(q)}{u_{\ee_\G}^\ty(q)\iota(\ty,q-\ee_\G)}   \right)^{-s},
\end{split}
\]

and finally, we have $\zeta^{\mathrm{reg}}_{\G'(\lri)}(s)=\sum_{\ell=1}^\infty \zeta^{\mathrm{reg}}_{\G'(\lri_{\ell+1})}(s)$.   \qedhere

\end{proof}


\def\cprime{$'$}

\end{document}